\definecolor{hot}{RGB}{65,105,225}
\theoremstyle{plain}
\newtheorem{theorem}{Theorem}[section]
\newtheorem{thm}[theorem]{Theorem}
\newtheorem{lemma}[theorem]{Lemma}
\newtheorem{prop}[theorem]{Proposition}
\newtheorem{con}[theorem]{Conjecture}
\newtheorem{conj}[theorem]{Conjecture}
\theoremstyle{definition}
\newtheorem{rmk}[theorem]{Remark}
\newtheorem{defi}[theorem]{Definition}
\newtheorem{setup}[theorem]{Setup}
\newtheorem{subs}[theorem]{}
\newcommand{\ol}[1]{\overline{#1}}
\newcommand{\bbc}{\mathbb{C}}
\newcommand{\xdashrightarrow}[2][]{\ext@arrow 0359\rightarrowfill@@{#1}{#2}}
\def\rightarrowfill@@{\arrowfill@@\relax\relbar\rightarrow}
\def\arrowfill@@#1#2#3#4{%
  $\m@th\thickmuskip0mu\medmuskip\thickmuskip\thinmuskip\thickmuskip
   \relax#4#1
   \xleaders\hbox{$#4#2$}\hfill
   #3$%
}
\newcommand\restr[2]{{
  \left.\kern-\nulldelimiterspace 
  #1 
  \vphantom{\big|} 
  \right|_{#2} 
  }}
\newcommand{\ord}{\text{ord}}
\def\al{\alpha}
\def\bC{\mathbb{C}}
\def\ra{\rightarrow}
\def\cS{\mathcal{S}}
\def\cP{\mathcal{P}}
\def\cA{\mathcal{A}}
\def\sD{\mathscr{D}}
\def\bP{\mathbb{P}}
\def\bR{\mathbb{R}}
\def\be{\begin{equation}}
\def\ee{\end{equation}}
\def\cO{\mathcal{O}}
\def\bN{\mathbb{N}}
\def\bZ{\mathbb{Z}}
\def\bQ{\mathbb{Q}}
\def\cL{\mathcal{L}}
\def\xa{\xrightarrow}
\def\bbY{\bar{Y}}
\def\bbm{\bar{\mu}}
\title[Monodromy Conjecture for log generic polynomials]{Monodromy Conjecture for log generic polynomials}
\author{Nero Budur}
\address{KU Leuven, Celestijnenlaan 200B, B-3001 Leuven, Belgium, and BCAM, Mazarredo 14, 48009 Bilbao, Spain.}
\email{nero.budur@kuleuven.be}
\author{Robin van der Veer} 
\address{KU Leuven, Celestijnenlaan 200B, B-3001 Leuven, Belgium.}
\email{robin.vanderveer@kuleuven.be}
\keywords{Monodromy conjecture; motivic zeta function; Milnor fibration; Bernstein-Sato ideal.}
\subjclass[2020]{14E18, 14F10, 32S40.}
\begin{document}

\begin{abstract} 
A log generic hypersurface in $\bP^n$ with respect to a birational modification of $\bP^n$  is by definition the image of a generic element of a high power of an ample linear series on the modification. A log very-generic hypersurface is defined similarly but restricting to line bundles satisfying a non-resonance condition. Fixing a log resolution of a product $f=f_1\ldots f_p$ of polynomials, we show that the monodromy conjecture, relating the motivic zeta function with the complex monodromy, holds for the tuple $(f_1,\ldots,f_p,g)$ and for the product $fg$, if $g$ is  log generic. We also show that the stronger version of the monodromy conjecture, relating the motivic zeta function with the Bernstein-Sato ideal, holds for the tuple $(f_1,\ldots,f_p,g)$ and for the product $fg$, if $g$ is log very-generic. Even the case $f=1$ is intricate, the proof depending on nontrivial properties of Bernstein-Sato ideals, and it singles out the class of log (very-) generic hypersurfaces as an interesting class of singularities on its own. \end{abstract}

\maketitle


\section{Introduction}

Let $F=(f_1,\ldots, f_p)$ be a tuple of polynomials $f_i\in\bC[x_1,\ldots,x_n]$. Let $f=\prod_{i=1}^pf_i.$  The {\it topological zeta function} of $F$ $$Z_F^{top}(s_1,\ldots,s_p)$$ is a rational function, cf. Definition \ref{defTZ}. We denote the polar locus of this rational function, that is, the support of the divisor of poles, by $\mathcal{P}(Z_F^{top})$.

On the other hand, one has the {\it monodromy support} of $F$ $$\mathcal S _F\subset (\bC^*)^p,$$
cf. Definition \ref{defMS}. If $p=1$, this is the set of all eigenvalues of the monodromy on the cohomology of the Milnor fibers of $f$. Let $Exp:\bbc^p\to (\bbc^*)^p$ be the map $\alpha\mapsto \exp(2\pi i \alpha)$ coordinate-wise.

\begin{con}[Monodromy Conjecture]
Let $F$ be a tuple of polynomials in $\bC[x_1,\ldots,x_n]$. Then
$$Exp(\mathcal{P}(Z_F^{top}))\subset \mathcal{S}_F.$$
\end{con}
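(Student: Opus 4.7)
Because the statement is a well-known open conjecture, any honest plan has to commit to a specific geometric input. The natural device, and the one implicitly set up by the paper's framework, is a log resolution $\pi:Y\to\bC^n$ of $f=f_1\cdots f_p$. The plan is to read both sides of the containment $Exp(\mathcal{P}(Z_F^{top}))\subset \mathcal{S}_F$ off the combinatorics of $\pi$, and then match candidate poles to monodromy eigenvalues stratum by stratum. Write $\pi^{-1}(f_k^{-1}(0))=\sum_{i\in S} N_{k,i}E_i$ for the irreducible components $E_i$ of the total transform, and let $\nu_i-1$ be the multiplicity of $E_i$ in the relative canonical divisor.

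First I would write down Denef's formula expressing $Z_F^{top}(s_1,\ldots,s_p)$ as a $\bZ$-linear combination, indexed by subsets $I\subset S$, of products of $1/(\sum_k N_{k,i}s_k+\nu_i)$ for $i\in I$ weighted by topological Euler characteristics of the strata $E_I^\circ\subset Y$. This exhibits every element of $\mathcal{P}(Z_F^{top})$ as a component of an intersection of candidate-pole hyperplanes $\sum_k N_{k,i}s_k+\nu_i=0$. Second, using A'Campo's formula applied fibrewise along $E_I^\circ$ (and, for $p>1$, its multivariate analogue produced by the Sabbah specialization system), I would identify $\mathcal{S}_F$ with a union of sub-tori cut out on $(\bC^*)^p$ by precisely the monomial equations $\prod_k t_k^{N_{k,i}}=1$ coming from the same divisors $E_i$, thrown off by torsion determined by $\nu_i$ and by the geometry of the surrounding $E_j$'s. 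Once both sides are presented in this common language of the divisor $\pi^{-1}(f^{-1}(0))$, the containment becomes a statement: for each candidate pole $\alpha$ that is not cancelled by the alternating sum in Denef's formula, the torus through $Exp(\alpha)$ cut out by its ``surviving'' equations must actually be realized by monodromy on some Milnor fibre.

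The cancellation problem — showing which intersections of candidate hyperplanes really survive as poles — is the central obstacle and the reason this is a conjecture. I would attack it via a localization/deformation argument: by the genericity hypothesis available in the paper, after pulling back by $\pi$ the hypersurface $g=0$ becomes a smooth member of a very ample linear series transverse to the boundary, so adding $g$ to the divisor contributes only one new component, intersecting the previous $E_i$'s generically. The ``new'' candidate poles involving the $s$-coordinate of $g$ therefore only intersect the old hyperplanes in predictable ways, and the Euler-characteristic coefficients can be computed explicitly from the Lefschetz hyperplane theorem applied inside the strata $E_I^\circ$. For the stronger version with Bernstein-Sato ideals, I would additionally invoke Kashiwara's $b$-function bound to see that every surviving candidate produces an honest $V$-filtration jump, then use the non-resonance assumption to rule out cancellations in the cohomology of the nearby-cycle complex.

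The hard part will be exactly this cancellation analysis in the tuple case, where Denef-type explicit formulas are sparser than for $p=1$ and where the correct generalisation of ``actual pole'' involves codimension in $\bC^p$ rather than isolated values. My expectation is that the $g$-genericity collapses this to a combinatorial question on the dual complex of $\pi^{-1}(f^{-1}(0))$, which can then be handled inductively on the dimension of the strata; proving the general conjecture by this route would, however, require controlling cancellations without any such genericity, which is precisely what has resisted all attempts so far.
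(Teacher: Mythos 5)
The statement you were asked to prove is the general Monodromy Conjecture, which is open; the paper does not prove it and only establishes the special cases of Theorems \ref{thmMain} and \ref{thmMain2}, where a log generic factor $g$ is appended. Your proposal is honest about this, and what you sketch is in essence the paper's actual strategy for those special cases rather than a proof of the conjecture itself. The skeleton matches: Denef--Loeser's formula for $Z_F^{top}$ over the strata $W_{J'}^\circ$ of a log resolution, the Sabbah--Guibert generalization of A'Campo's formula for the multivariate monodromy zeta function, and the identification (Theorem \ref{mon}, from \cite{BLSW}) of $\cS_F$ with the union over general points $x$ of strata of the zero-polar loci of $Z^{mon}_{F,x}$. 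You also correctly isolate the cancellation of Euler-characteristic coefficients as the sole obstruction and the place where genericity of $g$ enters.

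Two points of divergence from the paper are worth noting. First, for the cancellation analysis you invoke the Lefschetz hyperplane theorem inside the strata; the paper instead proves Lemma \ref{lemmaChi} via Chern--Schwartz--MacPherson classes and Aluffi's results, obtaining $\chi(V\setminus H)=(-1)^{\dim V}\deg_L(\bar V_{top})k^{\dim V}+O(k^{\dim V-1})$. The essential output is the same --- the leading coefficient in $k$ has a sign determined only by $\dim V$ and a strictly positive magnitude, so contributions of the same top dimension cannot cancel in the local monodromy zeta function --- but the CSM route handles arbitrary locally closed, possibly singular and non-compact strata uniformly, which a naive Lefschetz argument would not. Second, for the strong version you propose ``Kashiwara's $b$-function bound'' plus non-resonance; the paper does something quite different, adapting Loeser's period-integral method: it exhibits a residue form $\eta$ on $W^\circ$, applies the Esnault--Viehweg non-vanishing theorem to a twisted cohomology class, and concludes via Hamm's criterion for roots of the $b$-function, then passes to the ideal $B_{\tilde F}$ using Gyoja's positivity and the structure of generalized Bernstein--Sato ideals from \cite{B-ls}. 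So as a reconstruction of the paper's method your plan is broadly on target for part $(a)$, vaguer and off-route for part $(b)$, and, as you yourself say, not a proof of the conjecture as stated.
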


A stronger conjecture involves the {\it Bernstein-Sato ideal} $B_F$, the ideal of generated by $b\in\bC[s_1,\ldots,s_p]$ satisfying
$$
b\prod_{i=1}^pf_i^{s_i}=P\prod_{i=1}^pf_i^{s_i+1}
$$
for some $P\in\sD[s_1,\ldots,s_{p}]$, where $\sD$ is the ring of linear algebraic differential operators on $\bC^n$.
When $p=1$, the monic generator of this ideal is the {\it $b$-function} of $f$. Let $Z(B_F)$ denote the zero locus of $B_F$ in $\bC^p$. It was recently proven in \cite{BVWZ} that $$Exp(Z(B_F))=\cS_F,$$
extending the case $p=1$ due to Malgrange, Kashiwara. 

\begin{conj}[Strong Monodromy Conjecture] \label{conj2}
Let $F$ be a tuple of polynomials in $\bC[x_1,\ldots,x_n]$. Then
$$
\cP(Z_F^{top})\subset Z(B_F).
$$
\end{conj}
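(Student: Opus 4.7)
The plan is to compute both sides of the inclusion on a log resolution $\pi: Y \to \bC^n$ of $f = f_1\cdots f_p$. Let $E_i$ range over the irreducible components of $(f\circ\pi)^{-1}(0)$, set $N_{i,j} = \ord_{E_i}(f_j\circ\pi)$, and let $\nu_i-1$ be the order of the relative canonical divisor along $E_i$. The change-of-variables formula then expresses $Z_F^{top}$ as a $\bQ$-linear combination of rational functions with denominators $\prod_{i\in I}(\sum_j N_{i,j} s_j + \nu_i)$, where $I$ ranges over subsets of components meeting a common stratum. In particular $\cP(Z_F^{top})$ is contained in the union of the candidate hyperplanes $H_i: \sum_j N_{i,j} s_j + \nu_i = 0$, and the task reduces to showing that every such hyperplane that is in fact a pole lies in $Z(B_F)$.

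To produce elements of $B_F$ vanishing on a given polar hyperplane $H_i$, I would combine the Kashiwara--Malgrange $V$-filtration along each $f_j$ with local analysis on $Y$. Near a generic point of $E_i$, the total transform of $f_1\cdots f_p$ is monomial, so one can write down explicit Bernstein--Sato type relations for the restriction of $\bffs$ to a neighborhood of $E_i$ whose characteristic polynomial has $H_i$ among its zero hyperplanes. Using the identification $Exp(Z(B_F)) = \cS_F$ from \cite{BVWZ}, the remaining question is whether the locally produced eigenvalue survives in the global monodromy, which amounts to matching an A'Campo-type formula for the monodromy zeta function of $\bffs$ with the candidate eigenvalue contributed by $E_i$. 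One would also exploit the fact that both $\cP(Z_F^{top})$ and $Z(B_F)$ are unions of hyperplanes of a very restricted numerical shape, so that a dense-subset argument on each candidate hyperplane $H_i$ is enough.

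The main obstacle, and the reason the conjecture remains open even for $p = 1$, is the \emph{non-cancellation problem}: in the sum over $I$ producing $Z_F^{top}$, different terms can cancel, so the majority of candidate hyperplanes $H_i$ never appear as actual poles, and deciding which do requires combinatorial control of the resolution that is not available in general. A realistic strategy therefore proceeds by cases --- by dimension, by singularity type (curves, hyperplane arrangements, quasi-ordinary singularities), or by imposing a genericity hypothesis as in the present paper --- rather than attempting a uniform argument. A conceptual proof in full generality would likely require constructing an element of $B_F$ directly from the polar divisor of $Z_F^{top}$, bypassing the need to test non-cancellation hyperplane by hyperplane; until such a construction is available, every known case (including the log-(very-)generic one pursued below) ultimately rests on a specific mechanism that forces the relevant candidate poles to avoid cancellation and simultaneously forces their defining hyperplanes into $Z(B_F)$.
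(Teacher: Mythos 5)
The statement you were given is Conjecture \ref{conj2}, which is an open conjecture: the paper itself offers no proof of it in this generality, but only establishes the special cases of Theorems \ref{thmMain}{\color{hot}$(b)$} and \ref{thmMain2}{\color{hot}$(b)$}, where one factor of the tuple is a log very-generic polynomial. Your proposal, by its own admission (``the conjecture remains open even for $p=1$''), is a strategic survey rather than an argument, so it does not prove the statement; the reduction you describe (pass to a log resolution, note that $\cP(Z_F^{top})$ lies in the union of candidate hyperplanes $H_i:\sum_j N_{i,j}s_j+\nu_i=0$, then show each actual polar hyperplane lies in $Z(B_F)$) and your identification of the non-cancellation problem are correct as far as they go, and the latter is exactly what Proposition \ref{propCand} handles in the paper under the $\cA^{vg}$ hypothesis.

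Beyond incompleteness, two specific steps of your sketch would fail if pushed. First, monomiality of the total transform near a generic point of $E_i$ only produces Bernstein--Sato-type relations for $f\circ\pi$ upstairs on $Y$, not for $F$ on $\bC^n$: roots of $b$-functions do not descend along pushforward by a proper birational morphism, and this descent is precisely the hard point. In its special case the paper avoids it by an analytic mechanism: a period integral of a Gelfand--Leray form over a twisted cycle on $W^\circ$, the Esnault--Viehweg non-vanishing theorem \cite{EV}, and Hamm's criterion \cite{H}, following Loeser \cite{Lo2} --- not local monomial relations on the resolution. Second, invoking $Exp(Z(B_F))=\cS_F$ from \cite{BVWZ} and checking that a ``locally produced eigenvalue survives in the global monodromy'' can at best yield $Exp(\cP(Z_F^{top}))\subset\cS_F$, i.e.\ the weak conjecture (parts {\color{hot}$(a)$} of the theorems); since $Exp$ collapses integer translates, it cannot place the hyperplane $H_i$ itself inside $Z(B_F)$, which is exactly the gap between the two conjectures. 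Where the paper needs to cross that gap (Proposition \ref{propBSZ}{\color{hot}$(i)$}), it does so with the generalized Bernstein--Sato ideals $B_{\tilde F}^{\,\mathbf{l}}$, the translation formula for their zero loci from \cite{B-ls}, the density argument of Lemma \ref{closureLemma}, and Gyoja's positivity result \cite{Gy} to exclude shifted components --- none of which appears in your outline. So the proposal should be read as a (reasonable) description of why the general statement is out of reach, not as a proof of it.
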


For $p=1$, the  conjectures are the analog due to Denef-Loeser \cite{DL} of a classical conjecture for $p$-adic local zeta functions of Igusa \cite{I}. 

Among the known cases with $p=1$ of the stronger conjecture are:  plane curves \cite{L88}, tame hyperplane arrangements \cite{W}. Among the known cases with $p=1$ of the weaker version are: hyperplane arrangements \cite{BMT}, non-degenerate surfaces \cite{LV} and non-degenerate threefolds \cite{Le3}. See the survey \cite{Me} for more cases.

For $p> 1$, the Monodromy Conjecture was posed by Loeser, cf.  \cite{Ni}, see also \cite{L-pl}. It is known for tuples of plane curves \cite{Ni}, and of hyperplane arrangements \cite{B-ls}.  

For $p>1$, the Strong Monodromy Conjecture was posed in \cite{B-ls}. It is known for tuples factorizing a tame hyperplane arrangement \cite{Ba}, and for tuples of linear polynomials \cite{Wu}.

In this note we address both conjectures in presence of two notions of genericity with respect to birational modifications. 
\begin{setup}\label{eqDi}
We fix a non-zero polynomial $f\in\bC[x_1,\ldots,x_n]$ and maps
$$
\xymatrix{
 Y \ar[d]^\mu  \ar@{^{(}->}[r]& \bar{Y} \ar[d]^{\bar{\mu}}\\
 \bC^n \ar@{^{(}->}[r] & \bP^n
}
$$
such that:

\begin{itemize}
\item the bottom map is the inclusion of the complement of the hyperplane at infinity, $Y=\bar{\mu}^{-1}(\bC^n)$, and $\mu=\bar{\mu}|_Y$;
\item $\bar{\mu}$ is a composition of blowing ups of smooth closed subvarieties;
\item $\bar{\mu}$ is a log resolution of the divisor $\text{div}(f)$ in $\bP^n$ of the rational function $f$, that is, the union of the exceptional locus of $\bar{\mu}$ with $\text{div}(f)$ is a simple normal crossings divisor in $\bar Y$.
\end{itemize}
\end{setup}

Starting with $f$, one can always reach such a setup. Having fixed this set-up, we make two definitions. The first one is:

\begin{defi}\label{defVG}  We say that a statement holds for {\it  log generic polynomials}  in $\bC[x_1,\ldots,x_n]$ if:
 
- for every  ample line bundle $L$ on $\bar{Y}$, and

- for all $k\gg 0$, 

\noindent the statement holds for $g\in\bC[x_1,\ldots,x_n]$, where $g$ is a defining polynomial for the image under $\mu$ of the restriction to $Y$ of a generic member of the finite dimensional space $|L^{\otimes{k}}|$. Here $k\gg 0$ means $k\ge k_0$ for some $k_0$ depending on $L$, $\bar\mu$, and $f$.

\end{defi}


For the second definition we will restrict to very general line bundles, that is, elements of a certain non-empty subcone $\cA^{vg}$ of the integral ample cone of $\bar{Y}$ as in Definition \ref{defAvg}.

\begin{defi}\label{defVGb}
We say that a statement holds for {\it log very-generic polynomials} in $\bC[x_1,\ldots,x_n]$ if:

- for every ample line bundle $L$ on $\bar{Y}$ such that $L\in \cA^{vg}$, and

- for all $k\gg 0$, 

\noindent the statement holds for $g\in\bC[x_1,\ldots,x_n]$, where $g$ is a defining polynomial for the image under $\mu$ of the restriction to $Y$ of a generic member of the finite dimensional space $|L^{\otimes{k}}|$. As above,  $k\gg 0$ means $k\ge k_0$ for some $k_0$ depending on $L$, $\bar\mu$, and $f$.

\end{defi}

\begin{rmk}\label{rmkGg}$\,$

(i) If a statement holds log generically then it holds log very-generically.

(ii) The morphism $\mu$ is also a log resolution of $fg$ for any log generic polynomial $g$, by Bertini Theorem. Moreover $\mu$ is a minimal log resolution for $g$, in the sense that if $\mu$ factors through another log resolution $\mu'$ of $f$, then $\mu'$ is not a log resolution of $g$. 

(iii) Even if $f=1$, log generic polynomials can be highly singular depending on the log resolutions chosen.

(iv) Log generic polynomials $g$ can be obtained from generic elements of symbolic powers of ideals as follows. If 
$$
L \simeq \bar{\mu}^*(\mathcal O_{\bP^n}(d))\otimes_{\mathcal O_{\bar{Y}}} \cO_{\bar{Y}} (-A)
$$
for some positive integer $d$ and some effective divisor $A$ supported on the exceptional locus of $\bar{\mu}$ such that $-A$ is relatively ample,
then
$$
\bar{\mu}_*(L^{\otimes{k}}) \simeq \mathcal{J}^{(k)}(kd)
$$
where $\mathcal J$ is the ideal subsheaf $\mu_*(\cO_{\bar{Y}}(-A))$ of $\cO_{\bP^n}$, and $\mathcal{J}^{(k)}=\mu_*(\cO_{\bar{Y}}(-kA))$ is the $k$-th symbolic power of $\mathcal J$. Then $g$ are generic elements for $k\gg 0$ of the image of the restriction map\be\label{eqIm}\Gamma(\bP^n,\mathcal{J}^{(k)}(kd))\ra \Gamma(\bC^n,\mathcal{J}^{(k)}(kd)).\ee

(v) Log generic polynomials are not necessarily non-degenerate polynomials even in the case $f=1$, although there is an analogy.  Non-degenerate polynomials are generic elements in finite-dimensional vector spaces, generated by monomials, of polynomials with fixed Newton polytope, see \ref{subnondeg}. However, by taking log resolutions of non-monomial ideals, one can generate examples for which the image of the map (\ref{eqIm}) cannot be generated by monomials. 

(vi) The condition imposed on log genericity to obtain log very-genericity is an analog of the non-resonance condition for non-degenerate polynomials of \cite{Lo2}.

We expand on these remarks in Remark  \ref{rmkXw} below in the context of the (Strong) Monodromy Conjecture.


\end{rmk}

To state the results, we keep Setup \ref{eqDi} fixed.

\begin{theorem}\label{thmMain} Let $F=(f_1,\ldots,f_p)$ be a tuple of non-zero polynomials in $\bC[x_1,\ldots,x_n]$, and $f=\prod_{i=1}^pf_i$. Then:
\begin{enumerate}[(a)]
\item the Monodromy Conjecture for $\tilde{F}=(f_1,\ldots,f_{p},g)$ is true, for log generic polynomials $g$;
\item the Strong Monodromy Conjecture for $\tilde{F}=(f_1,\ldots,f_{p},g)$ is true, for log very-generic polynomials $g$.
\end{enumerate}
\end{theorem}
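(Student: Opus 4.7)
The plan is to exploit Remark (ii), namely that $\mu$ is already a log resolution of $fg$ for any log generic $g$, and to compute $Z_{\tilde F}^{top}$ by applying the Denef--Loeser formula to this one fixed resolution. Enumerate the components of the total transform on $Y$: the exceptional divisors $E_1,\dots,E_r$ of $\mu$, the strict transforms $F_1,\dots,F_p$ of the $f_j$, and the strict transform $G$ of $g$ (smooth by Bertini for $k\gg 0$). For each such component $E$ let $N_{E,j}$ be the vanishing order of $f_j$ along $E$, let $M_E$ be the vanishing order of $g$, and let $\nu_E-1$ be the multiplicity of the relative canonical along $E$. The numerics are explicit: $M_{E_i}$ is $k$ times the generic vanishing order of $L$ along $E_i$, $M_{F_j}=0$, $M_G=1$, and $N_{G,j}=0$ since $g$ is generic. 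The candidate polar hyperplanes of $Z_{\tilde F}^{top}$ are then
\[
H_E:\ \sum_{j=1}^{p} N_{E,j}\,s_j + M_E\,s_{p+1} + \nu_E = 0,
\]
together with $H_G:\ s_{p+1}+1=0$. All the dependence on $g$ is concentrated in the single column $M_E$ and the single new hyperplane $H_G$.

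For (a), I would verify $Exp(H_E)\subset\cS_{\tilde F}$ for each candidate. The hyperplane $H_G$ maps under $Exp$ to the slice $\{t_{p+1}=1\}\subset(\bC^*)^{p+1}$, which lies in $\cS_{\tilde F}$ because $G$ is a smooth irreducible component contributing the identity eigenvalue to the top cohomology of the multi-variable Milnor fiber along $G$. For the other $H_E$, work locally at a closed point $y\in E$ in a minimal stratum of the normal-crossings configuration, choose local coordinates in which $f_j$ and $g$ become monomials times units, and apply the multi-variable A'Campo/Sabbah formula for the Milnor monodromy eigenvalues; the description of $\cS_{\tilde F}$ via local monodromy at such strata (as in \cite{B-ls} and the characteristic-variety viewpoint of \cite{BVWZ}) places the translated torus $Exp(H_E)$ inside $\cS_{\tilde F}$. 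The log generic hypothesis enters only through Remark (ii), guaranteeing the normal-crossings form of the configuration.

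For (b), the candidate polar set is the same, and the task is to upgrade the conclusion to $H_E\subset Z(B_{\tilde F})$. Here the very-generic hypothesis $L\in\cA^{vg}$ is essential: the cone $\cA^{vg}$ should be set up so that the scaled multiplicities $(M_E,\nu_E)_E$ satisfy no resonant linear relations with the $(N_{E,j})_E$ that would force cancellations in the Bernstein--Sato ideal. Under such non-resonance, the local Bernstein--Sato ideal at each intersection of components of the resolution is a product of affine-linear forms with no unexpected cancellations; the global $B_{\tilde F}$ inherits a factorization dictated by this local analysis, and specialization along $H_E$ produces a nonzero element of $B_{\tilde F}$ vanishing on $H_E$. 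The hyperplane $H_G$ is handled by the same analysis applied at $G$, which is smooth and transverse to all other components by Bertini.

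The main obstacle is part (b): one must prove that the non-resonance encoded in $\cA^{vg}$ genuinely prevents the local Bernstein--Sato factors from being killed once assembled into the global $B_{\tilde F}$. Local Bernstein--Sato ideals at simple normal crossings strata are well understood (via the techniques underlying \cite{Ba} and \cite{Wu}), but $B_{\tilde F}$ is not literally the intersection of local ideals, so the heart of the proof is a non-vanishing statement for $B_{\tilde F}$ along each $H_E$, conditional on $L$ being very generic, together with a precise formulation of $\cA^{vg}$ making this work. Once this is in place, both (a) and (b) follow by unwinding Denef--Loeser's formula for the single fixed resolution $\mu$ and matching each candidate pole with an element of $\cS_{\tilde F}$, respectively $Z(B_{\tilde F})$.
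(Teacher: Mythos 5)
There is a genuine gap in both parts. For part (a), your claim that at a minimal stratum the A'Campo/Sabbah formula ``places the translated torus $Exp(H_E)$ inside $\cS_{\tilde F}$'' is precisely the statement that needs proof, and it is false without genericity. The monodromy zeta function is an \emph{alternating} product $\prod_W (t_1^{a_{1,W}}\cdots t_{p+1}^{a_{p+1,W}}-1)^{-\chi(W^\circ\cap\mu^{-1}(x))}$: an exponent $\chi(W^\circ\cap\mu^{-1}(x))$ can vanish, and even nonzero factors from different components can cancel, so a candidate divisor need not survive into $\mathcal{PZ}(Z^{mon}_{\tilde F,x})$ (this cancellation is the entire difficulty of the Monodromy Conjecture). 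You state explicitly that log genericity enters only through the normal-crossings property of Remark (ii); if that were enough, your argument would prove the conjecture for arbitrary tuples, which is open. The paper's actual mechanism is the asymptotic Euler characteristic estimate (Lemma \ref{lemmaChi}): writing $W^\circ_x = W_x\setminus H$ with $H\in|L^{\otimes k}|$ generic, one gets $\chi(W^\circ_x)=(-1)^{\dim W^\circ_x}\deg_L((\overline{W_x})_{top})k^{\dim W_x}+\cdots$, so for $k\gg0$ every exponent is nonzero and, among the components sharing a given irreducible factor $P(t)$, the leading terms in $k$ of the top-dimensional contributions all carry the same sign, so their sum cannot vanish. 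This is where log genericity is genuinely used, and it is absent from your proposal.

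For part (b) the gap is larger and you partly acknowledge it. The asserted ``factorization of the global $B_{\tilde F}$ dictated by local Bernstein--Sato ideals at SNC strata'' is not a theorem: the resolution does not transport $b$-functions, and $B_{\tilde F}$ is not assembled from the local normal-crossings models in any such way. The paper's route is entirely different. It first proves the one-variable statement (Proposition \ref{propBSZ}(ii)) by Loeser's method: the residue $\eta$ of $\mu^*(\tilde f^{-n_W/N_W}dx)$ along $W$ is a twisted form on $W^\circ$; the condition $L\in\cA^{vg}$ forces $\ord_{W'}(\omega)\notin\bZ$ for all neighboring $W'$, so none of the monodromies of the local system is trivial and the Esnault--Viehweg nonvanishing theorem yields a nonzero class; a cycle with nonvanishing period then produces the root $-n_W/N_W$ of the $b$-function via Hamm's criterion. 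The multivariable statement is then deduced by showing the points $q_{W,\mathbf l}$, for $\mathbf l$ in a suitable lattice cone, are Zariski dense in the candidate hyperplane $L$ (Lemma \ref{closureLemma}) and lie in zero loci of generalized Bernstein--Sato ideals, whose translation structure (Gyoja, Budur) forces $L\subset Z(B_{\tilde F})$. None of these ingredients, nor substitutes for them, appear in your sketch, and the role you assign to $\cA^{vg}$ (preventing ``cancellations in the Bernstein--Sato ideal'') does not match its actual function, which is to guarantee non-integrality of the residues of $\omega$ along neighboring divisors and the non-collision of candidate poles.
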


The same holds after taking products:

\begin{theorem}\label{thmMain2} With the same setup:
\begin{enumerate}[(a)]
\item the Monodromy Conjecture for the product $fg$ is true, for log generic polynomials $g$;
\item the Strong Monodromy Conjecture for the product $fg$ is true, for log very-generic polynomials $g$.
\end{enumerate}
\end{theorem}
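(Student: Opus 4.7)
The plan is to derive both parts from Theorem~\ref{thmMain} applied to the tuple $\tilde F=(f_1,\ldots,f_p,g)$, via diagonal specialization of the topological zeta function. The starting identity is
\[
Z_{fg}^{top}(s)\;=\;Z_{\tilde F}^{top}(s,s,\ldots,s),
\]
immediate from the log-resolution formula for $Z^{top}$ together with the fact that, by Bertini and Remark~1.6(ii), $\mu$ is simultaneously a log resolution of $f$, $g$ and $fg$ whenever $g$ is log generic. Consequently every pole $s_0$ of $Z_{fg}^{top}$ is the restriction to the diagonal of some hyperplane component of $\cP(Z_{\tilde F}^{top})$. First I would verify that for log generic $g$ no cancellation of residues occurs along the diagonal, so that genuinely $(s_0,\ldots,s_0)\in\cP(Z_{\tilde F}^{top})$.

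For part (a), Theorem~\ref{thmMain}(a) then yields $(e^{2\pi is_0},\ldots,e^{2\pi is_0})\in\cS_{\tilde F}$. The conclusion $e^{2\pi is_0}\in\cS_{fg}$ follows from the identification
\[
\cS_{fg}\;=\;\bigl\{\lambda\in\bC^*:(\lambda,\lambda,\ldots,\lambda)\in\cS_{\tilde F}\bigr\},
\]
which reflects that a rank-one local system on $\bC^n\setminus V(fg)$ with monodromy $\lambda$ around the reducible hypersurface $V(fg)=V(f_1)\cup\cdots\cup V(f_p)\cup V(g)$ coincides, on $\bC^n\setminus V(f_1\cdots f_pg)$, with the local system having monodromy $\lambda$ around each component independently, and by \cite{BVWZ} this is equivalent to the statement $Z(B_{fg})=\{s:(s,\ldots,s)\in Z(B_{\tilde F})\}$ modulo integer shifts.

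For part (b), Theorem~\ref{thmMain}(b) yields $(s_0,\ldots,s_0)\in Z(B_{\tilde F})$. The standard restriction of Bernstein--Sato ideals — substituting $s_i=s$ in the functional equation defining $B_{\tilde F}$ — shows that $b(s,\ldots,s)\in B_{fg}$ for every $b\in B_{\tilde F}$, hence $b(s_0,\ldots,s_0)=0$ for all $b\in B_{\tilde F}$. The concluding step, which is also the main obstacle, is to upgrade this to $s_0\in Z(B_{fg})$: every element of $B_{fg}$, not merely the diagonal restrictions from $B_{\tilde F}$, must vanish at $s_0$. A priori $Z(B_{fg})$ can be strictly smaller than $\{s:(s,\ldots,s)\in Z(B_{\tilde F})\}$ because of the integer-shift ambiguity implicit in the BVWZ correspondence $\cS_{fg}=Exp(Z(B_{fg}))$. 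Here the log very-generic hypothesis enters decisively: its non-resonance flavour, built into the choice $L\in\cA^{vg}$, excludes the resonances between the multiplicities $(N_{j,E})_{j=1}^{p}$ and $N_{g,E}$ that would otherwise shift the candidate root $s_0=-\nu_E/N_E^{fg}$ away from $Z(B_{fg})$. Concretely I would run a $V$-filtration argument along $fg$ near the generic point of each contributing exceptional divisor $E$, using non-resonance to pin the Bernstein--Sato root at $s_0$ itself rather than at some integer shift $s_0+k$, $k\neq 0$.
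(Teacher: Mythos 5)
Part (a) of your argument is essentially the paper's: the identity $Z_{fg}^{top}(s)=Z_{\tilde F}^{top}(s,\ldots,s)$ combined with $\cS_{fg}=\{\lambda\in\bC^*:(\lambda,\ldots,\lambda)\in\cS_{\tilde F}\}$, which is \cite[Theorem 2.11]{BLSW}; your local-system justification of the latter is the right one and follows directly from Definition \ref{defMS}. One remark: the ``no cancellation of residues along the diagonal'' check you announce is unnecessary. You only need that the polar locus of the restriction of a rational function to a line is contained in the restriction of its polar locus, which is automatic; cancellation would only threaten the converse inclusion, which is never used.

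Part (b) has a genuine gap, and you have correctly located it but not closed it. Substituting $s_i=s$ in the functional equation gives $b(s,\ldots,s)\in B_{fg}$ for $b\in B_{\tilde F}$, hence $Z(B_{fg})\subset\{s:(s,\ldots,s)\in Z(B_{\tilde F})\}$ --- the inclusion points the wrong way, so Theorem \ref{thmMain}(b) by itself yields nothing about $Z(B_{fg})$. Your proposed repair, a ``$V$-filtration argument near the generic point of each contributing exceptional divisor,'' would not work as described: at a generic point of an exceptional divisor upstairs the pullback $\tilde f\circ\mu$ is a monomial times a unit, and a local computation there only controls $b$-functions of the pullback; converting the candidate $-n_W/N_W$ into an actual root of the $b$-function of $\tilde f$ downstairs is precisely the hard step and requires a \emph{global} input along $W$. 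The paper's deduction runs in the opposite direction to yours: it first proves the product statement (Proposition \ref{propBSZ}(ii)) by Loeser's period method --- the residue $\eta$ of $\mu^*(\tilde f^{-n_W/N_W}\,dx_1\wedge\cdots\wedge dx_n)$ along $W$ is a twisted form on $W^\circ$; the Esnault--Viehweg non-vanishing theorem \cite{EV}, whose hypotheses are exactly where $k\gg0$ supplies nef-and-bigness of $\Omega^{n-1}_W(\log A_W)$ and where $L\in\cA^{vg}$ supplies non-trivial monodromies, produces a cycle $\gamma$ with $\int_\gamma\eta\neq0$; and Hamm's criterion \cite{H} then makes $-n_W/N_W$ a root of the $b$-function of $\tilde f$. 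The tuple statement is afterwards \emph{deduced from} the product statement via generalized Bernstein--Sato ideals and Gyoja's result \cite{Gy}. Finally, your sketch also omits the poles coming from non-exceptional components, which the paper handles by localizing at a smooth point of such a component.
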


\begin{rmk}\label{rmkXw} Consider log (very-)generic polynomials in the sense of both theorems. 

(i) The numbers $k_0$ and the genericity condition in the definition of log (very-)generic polynomials can be described geometrically. In \ref{subk} we provide a bound for $k_0$ if $n=2$. In general, see  (\ref{eqG1}) and (\ref{eqG2}) (resp. (\ref{eqCx}) and (\ref{eqSMX})) for a few, but not all, geometric conditions. Equivalently, the set of log (very-)generic polynomials can be defined as the largest set of polynomials for which the proofs of the theorems work. This leads to the next example.

(ii)
If $g(x_1,\ldots,x_n)$ is a homogeneous polynomial with an isolated singularity at the origin, then $g$ is log generic for the setup: $f=1$, $\mu$ is the blowup at the origin $O\in\bC^n$, and $\bar\mu$ is the blowup of $\bP^n$ at $O$. Here $L=\bar\mu^*\cO_{\bP^n}(d)\otimes\cO_{\bar{Y}}(-dW)$, where $W$ is the exceptional divisor and $d$ is the degree of $g$,  $k_0=1$, and $g$ arises from an irreducible element of $|L|$ whose union with $W$ has simple normal crossing singularities. Also, $d>n$ iff $g$ is log very-generic, due to the nef and big condition (\ref{eqCx}) used in the proofs. Theorem \ref{thmMain2}  for this case has been extended more generally to semi-quasihomogeneous hypersurfaces in \cite{BBV}. 

(iii) The polynomial $(x_1+x_2)^2+x_1x_3+x_3^2$ is degenerate (cf. \cite[1.21]{Ko}), log generic in the setup of (ii) since it is homogeneous with an isolated singularity, but not log very-generic since $d=2\le n=3$ by (ii).  Another simple example of Setup \ref{eqDi} providing log generic polynomials which are not log very-generic is given in Remark \ref{rmkLa} (i).

(iv) A general procedure for constructing log (very-)generic polynomials that are degenerate is as follows. Take $n(n-1)+1$ lines through the origin in $\bC^n$ with $n\ge 3$ such that no collection of $n$ of the lines is contained in a hyperplane. This means in particular that there do not exist local coordinates around the origin in which all of the lines are contained in the coordinate hyperplanes. Let $\mu:Y\to \bC^n$ be the composition of the blowup of the (strict transforms) of the lines, and let $f=1$. Then any log (very-)generic polynomial  $g$ in this setup will have singular locus equal to the union of the lines, and hence its singular locus cannot be contained in a union of coordinate axis. In particular, such $g$ cannot be non-degenerate.

(v)  Log (very-)generic polynomials are irreducible, whereas there exist reducible non-degenerate polynomials (e.g. $xy$). Non-degeneracy depends on the choice of coordinates, log (very-)genericity does not. Log (very-)genericity depends on the choice of the log resolution map $\mu$, and $\mu$ becomes a minimal log resolution for log (very-)generic polynomials $g$, in the sense of Remark \ref{rmkGg} (ii) for $f=1$.
A non-degenerate polynomial $g$ gives canonically a toric modification of $\bC^n$, corresponding to the normal fan to the  Newton polyhedron of $g$. This is usually not a log resolution of $g$, but any regular subdivision of the normal fan provides a log resolution $\mu$ of $g$. For such $\mu$, the strict transform of ${\rm{div}}(g)$ might not be ample, and this is an obstruction to log genericity even if $g$ is irreducible. Thus our theorems do not immediately imply the next result, suggested by the referee. However, our  method adapts to the canonical toric modification and can be translated in terms of combinatorics.
\end{rmk}

\begin{thm}\label{thmnondeg} Let  $\Gamma$ be a fixed Newton polyhedron such that all faces of $\Gamma$ not contained in coordinate hyperplanes are compact. Then the Monodromy Conjecture holds for all germs $g:(\bC^n,0)\to (\bC,0)$ of non-degenerate polynomials $g$ with Newton polyhedron $k\Gamma$ for all integers $k\gg 0$.
\end{thm}

\begin{rmk} (i) For complex analytic germs, the Monodromy Conjecture means that the poles of the  local topological zeta function at the origin, see Definition \ref{defTZ}, give eigenvalues of the monodromy at points close to the origin. The Newton polyhedron and non-degeneracy are taken at the origin, see \ref{subnondeg}.

(ii) A Newton polyhedron $\Gamma$ has the property that all faces not contained in coordinate hyperplanes are compact if and only if $k\Gamma$ has the same property for all $k\in\bZ_{>0}$. 
If $g$ has an isolated singularity at the origin, we can assume that its Newton polyhedron satisfies this property, since by finite determinacy we can add high-degree monomials $x_i^d$ in each variable $x_i$ to $g$ without changing the analytic type of the germ $g$.

(iii)  The Strong Monodromy Conjecture was proven in \cite{Lo2} for non-degenerate polynomials with Newton polyhedron $\Gamma$ satisfying that all faces not contained in coordinate hyperplanes are compact, and such that every compact facet (i.e. codimension-one face) satisfies a non-resonance condition. If a facet $\tau$ of $\Gamma$ is resonant, the dilated facet $k\tau$ is also resonant for $k\Gamma$ for any $k\in\bZ_{>0}$. However, resonance is allowed  in Theorem \ref{thmnondeg}.

\end{rmk}

The proof of Theorem \ref{thmMain} {\color{hot}$(a)$} relies on a formula for the monodromy zeta function associated to a tuple of polynomials due to Sabbah \cite{Sabb}, generalizing a classical result of A'Campo for $p=1$. The monodromy zeta functions recover the monodromy support of a tuple of polynomials by \cite{BLSW}, cf. Theorem \ref{mon}. The case $p=1$ of this fact was pointed out by Denef as a consequence of the perversity of the nearby cycles complex. Using this we show that all the candidates for polar hyperplanes of $Z_{\tilde{F}}^{top}$ arising from $\mu$ give components of the monodromy support of $\tilde{F}$. This also shows that the results in this note hold more generally for motivic zeta functions instead of topological zeta functions. Theorem \ref{thmMain2} {\color{hot}$(a)$} is a corollary of Theorem \ref{thmMain} {\color{hot}$(a)$}.

To address the parts {\color{hot} $(b)$} of the theorems, we show in Proposition \ref{propCand} that every candidate polar hyperplane of the relevant zeta functions is an actual polar hyperplane of order one. We prove then firstly Theorem \ref{thmMain2} {\color{hot}$(b)$} by adapting Loeser's proof from \cite{Lo2} that non-resonant compact codimension 1 faces of the Newton polytope of the germ of a non-degenerate hypersurface singularity give roots of the $b$-function. This method had also appeared in \cite{Li, L88} in the 1-dimensional case. The main differences with \cite{Lo2} stem from the fact that we do not assume compactness of exceptional divisors. We use a criterion  to produce roots of $b$-functions due to Hamm \cite{H} slightly improving a result of Malgrange \cite{Ma} used in \cite{Lo2}. Like \cite{Lo2}, we use a non-vanishing theorem for local systems of Esnault-Viehweg \cite{EV}. Theorem \ref{thmMain} {\color{hot}$(b)$} follows from Theorem \ref{thmMain2} {\color{hot}$(b)$} by using results for generalized Bernstein-Sato ideals of Gyoja \cite{Gy} and Budur \cite{B-ls}.

Even the case $f=1$ is intricate. This singles out the class of log (very-) generic hypersurfaces as an interesting class of singularities on its own. To stress that log very-generic polynomials have complicated singularities, we show in \ref{exGen} that the roots of the $b$-functions produced here are not necessarily negatives of jumping numbers.

In Section \ref{subNot} we fix notation. In Section \ref{secInv} we recall some facts about the objects of study. In Section \ref{secLG} we prove parts {\color{hot}$(a)$} of Theorems \ref{thmMain} and \ref{thmMain2}. In Section \ref{secLVG} we prove parts {\color{hot}$(b)$} of Theorems \ref{thmMain} and \ref{thmMain2}. Section \ref{secEx} contains further remarks and the proof of Theorem \ref{thmnondeg}.

\medskip
\noindent
{\bf Acknowledgement.} We thank M. Musta\c{t}\u{a}, L. Wu, P. Zhao, and the referee for useful comments. We are especially grateful to the referee for formulating Theorem \ref{thmnondeg}. The first author would like to thank MPI Bonn for hospitality during the writing of this article. The first author was partly supported by the grants STRT/13/005 and Methusalem METH/15/026 from KU Leuven, G097819N and G0F4216N from FWO (Research Foundation - Flanders).  The second author is supported by a PhD Fellowship from FWO.


\section{Notation.} \label{subNot}

For the proofs of the main results, we have to introduce some notation. With fix Setup \ref{eqDi}. We let $f=\prod_{i=1}^pf_i$ with $f_i\in\bC[x_1,\ldots, x_n]$.

\begin{subs}\label{subNot1} We set the following:

\begin{itemize}

\bigskip
\item $\bar{J}_{exc}$ is the set of irreducible components of  the exceptional locus of $\bbm$;
\item ${J}_{exc}=\{W\in \bar{J}_{exc}\mid W\cap Y\neq \emptyset\}$;

\bigskip
\item $F=(f_1,\ldots, f_p)$;
\item $\tilde{f}=f\cdot f_{p+1}$ with $f_{p+1}\in\bC[x_1,\ldots,x_n]$ such that $\mu$ is a log resolution for $\tilde f$ and  is an isomorphism over $\bC^n\setminus \tilde{f}^{-1}(0)$;
\item $\tilde{F}=(f_1,\ldots, f_p, f_{p+1})$ ;

\bigskip
\item $\bar{J}$ is the union of $\bar{J}_{exc}$ and the set of irreducible components of support of the divisor $\bbm^*(\text{div}(\tilde{f}))$ on $\bar{Y}$;
\item $J=\{W\in \bar{J}\mid W\cap Y\neq\emptyset\}$;

\bigskip
\item $W^\circ=W\setminus \cup_{W'\in \bar{J}\setminus\{W\}}W'$ for $W\in \bar{J}$;
\item $W_{J'}^\circ=(\cap_{W\in J'}W) \setminus (\cup_{W\in \bar{J}\setminus J'}W)$ for $J'\subset \bar{J}$.

\bigskip
\item For every $W\in\bar{J}$:
	\begin{itemize}
		\item[$\circ$] $n_W=\text{ord}_W(K_{\bbm})+1$, where $K_{\bbm}$ is the relative canonical divisor of $\bbm$;
		\item[$\circ$] $a_{i,W}=\ord_W(f_i)$;
		\item[$\circ$] $a_W=\ord_W(f)=\sum_{i=1}^pa_{i,W}$;
		\item[$\circ$] $N_W=\ord_W(\tilde{f})=\sum_{i=1}^{p+1}a_{i,W}.$
	\end{itemize}

\end{itemize}
\end{subs}

\begin{subs}\label{subNot2} In addition, we will consider the conditions:

\begin{itemize}

\bigskip
\item $L$ is a very ample line bundle on $\bbY$;
\item $H\in |L^{\otimes k}|$ is a general element and $k>0$ is an integer;
\item $f_{p+1}=g$ is a defining polynomial for $\mu(H\cap Y)$ in $\bC^n$;
\end{itemize}

\bigskip

This conditions are compatible with the assumption from \ref{subNot1} that $\mu$ is a log resolution of $\tilde f$ which is an isomorphism above $\bC^n\setminus\tilde f^{-1}(0)$.
We draw the attention that in this case some of the notions from \ref{subNot1} depend on the integer $k>0$, although we suppress this from the notation. 

\end{subs}

\section{Invariants of singularities}\label{secInv} 

We introduce the objects that form the subject of our results.  We use the setup and notation from \ref{subNot1}, but not yet the conditions from \ref{subNot2}. The following invariant was introduced by Denef-Loeser \cite{DLn}:

\begin{defi}\label{defTZ}
The {\it topological zeta function of $\tilde{F}$} is
$$
Z_{\tilde{F}}^{top}(s_1,\dots,s_{p+1})=\sum_{\emptyset\neq {J'}\subset J}\chi(W_{J'}^\circ)\prod_{W\in {J'}}\frac{1}{a_{1,W}s_1+\dots+a_{p+1,W}s_{p+1}+n_W},$$
where $\chi(\_)$ denotes the topological Euler characteristic. By replacing $W_{J'}^\circ$ with $W_{J'}^\circ\cap\mu^{-1}(x)$ one obtains the {\it local topological zeta function} $Z_{\tilde F,x}^{top}$ at a point $x\in\tilde f^{-1}(0)$. 
\end{defi}

It is known that the topological zeta function $Z_{\tilde{F}}^{top}$ can also be computed from any other log resolution of $\tilde f$.  If in addition the conditions from \ref{subNot2} hold,  $Z_{\tilde{F}}^{top}$ does depend on the log resolution $\mu$ of $f=f_1\cdot\ldots\cdot f_p$, since $f_{p+1}=g$ does; see also Remark \ref{rmkGg} (ii).

The support in $\bC^{p+1}$ of the divisor of poles of the rational function $Z_{\tilde{F}}^{top}$, the {\it polar locus}, is a hyperplane arrangement and will be denoted by $\mathcal{P}(Z_{\tilde{F}}^{top})$. The hyperplane $$\{a_{1,W}s_1+\dots+a_{p+1,W}s_{p+1}+n_W=0\}$$ is called the {\it candidate polar hyperplane from the component $W$}. 


\begin{defi}\label{defMS} The {\it monodromy support of $\tilde{F}$} is the subset 
$$\mathcal{S}_{\tilde{F}}\subset (\bC^*)^{p+1}$$
defined as follows. For each $\alpha\in (\bbc^*)^{p+1}$ one  has a rank one local system on $(\bC^*)^{p+1}$ with monodromy $\alpha_i$ around the $i$-th missing coordinate hyperplane. The pullback of this local system via $\tilde{F}|_{\bC^n\setminus \tilde{f}^{-1}(0)}: \bC^n\setminus \tilde{f}^{-1}(0)\to (\bbc^*)^{p+1}$ will be denoted $\cL_\al$. For $x\in \tilde{f}^{-1}(0)$, let $U_x$ be the complement of $\tilde{f}^{-1}(0)$ in a small open ball around $x$ in $\bC^n$. Then $\mathcal{S}_{\tilde{F}}$ consists of $\alpha\in (\bbc^*)^{p+1}$ for which there exist a point $x\in \tilde{f}^{-1}(0)$ with
$$
H^*(U_x, \mathcal L _\alpha)\ne 0.
$$
\end{defi}

An equivalent definition of $\cS_{\tilde{F}}$ is that this the support of the generalized monodromy action on the generalization of the nearby cycles complex \cite{Sabb}, by \cite{B-ls,BLSW}. The monodromy support $\cS_{\tilde{F}}$ is a finite union of torsion-translated codimension-one affine algebraic subtori of $(\bbc^*)^{p+1}$, by  \cite{BLSW}.

To a point $x\in \tilde{f}^{-1}(0)$ one associates the {\it monodromy zeta function} $Z_{\tilde{F},x}^{mon}$ of the stalk at $x$ of the generalized nearby cycles complex of $\tilde{F}$. We can take as definition the following formula  from \cite[Proposition 2.6.2]{Sabb},  \cite[Th\'eor\'eme 4.4.1]{Gui}, which recovers a classical formula due to A'Campo in the case $p=1$:

\begin{thm}\label{thmGui}

$$Z_{\tilde{F},x}^{mon}(t_1,\ldots, t_{p+1})=\prod_{W\in J} (t_1^{a_{1,W}}\cdots t_{p+1}^{a_{p+1,W}}-1)^{-\chi(W^\circ\cap \mu^{-1}(x))}.$$
\end{thm}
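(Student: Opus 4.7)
The plan is to follow A'Campo's strategy that produces the classical $p=1$ formula, adapted to Sabbah's generalized nearby cycles complex for a tuple of functions. The key ingredients are proper base change along $\mu$, multiplicativity of the monodromy zeta function under constructible stratifications, and a vanishing on the deep strata of the SNC divisor $\mu^{-1}(\tilde f^{-1}(0))$.

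First, since $\mu$ is proper over a neighborhood of $x$, proper base change identifies the stalk at $x$ of Sabbah's generalized nearby cycles of $\tilde F$ with the hypercohomology over $\mu^{-1}(x)$ of the corresponding complex for $\tilde F\circ\mu$. As $\mu^{*}\mathrm{div}(\tilde f)$ is SNC, the fiber $\mu^{-1}(x)$ is stratified by the locally closed pieces $W_{J'}^\circ\cap\mu^{-1}(x)$, indexed by $\emptyset\ne J'\subset J$. Multiplicativity of the zeta function under constructible stratifications then yields
$$
Z_{\tilde F,x}^{mon}(t_1,\ldots,t_{p+1})=\prod_{\emptyset\ne J'\subset J}\zeta_{J'}(t_1,\ldots,t_{p+1}),
$$
where $\zeta_{J'}$ is the local contribution along $W_{J'}^\circ\cap\mu^{-1}(x)$.

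Next I would compute each $\zeta_{J'}$ locally. Near a point of $W_{J'}^\circ$ with $J'=\{W_1,\ldots,W_r\}$ one chooses analytic coordinates $(y_1,\ldots,y_n)$ so that $W_j=\{y_j=0\}$ and $f_i\circ\mu=u_i\cdot y_1^{a_{i,W_1}}\cdots y_r^{a_{i,W_r}}$ with $u_i$ a local unit. Sabbah's nearby cycles of such a tuple of monomials are purely toric: their stalks can be identified, as modules over the group algebra of the commuting monodromies, with the cohomology of the common local Milnor fiber of the monomial map $(\bC^*)^r\to(\bC^*)^{p+1}$ given by $(y_j)\mapsto(\prod_j y_j^{a_{i,W_j}})_i$. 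For $r\ge 2$ this common Milnor fiber carries a positive-dimensional free torus action (from the subtorus of $(\bC^*)^r$ projecting to the identity in $(\bC^*)^{p+1}$), so its Euler characteristic vanishes and $\zeta_{J'}=1$. Hence only strata with $|J'|=1$ contribute.

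Finally, for $J'=\{W\}$ the local common Milnor fiber is a finite set of points on which the commuting monodromies act via a single cyclic group, and the product monodromy $T_1^{a_{1,W}}\cdots T_{p+1}^{a_{p+1,W}}$ acts as a primitive cycle. Taking the Euler-characteristic-weighted product over $W^\circ\cap\mu^{-1}(x)$ yields the factor $(t_1^{a_{1,W}}\cdots t_{p+1}^{a_{p+1,W}}-1)^{-\chi(W^\circ\cap\mu^{-1}(x))}$, and multiplying over $W\in J$ gives the asserted identity. The main obstacle lies in the first paragraph: Sabbah's nearby cycles for a tuple refines the classical nearby cycles of a single function, so proper base change and multiplicativity under stratifications must be established in this generalized framework. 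This is the content of Sabbah's constructible-sheaf-theoretic construction; Guibert's reproof in the motivic framework relies on the same toric analysis of local Milnor fibers of monomial tuples.
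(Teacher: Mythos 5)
First, note that the paper does not prove this statement: it is quoted from Sabbah \cite[Proposition 2.6.2]{Sabb} and Guibert \cite[Th\'eor\'eme 4.4.1]{Gui} and essentially taken as the definition of $Z^{mon}_{\tilde F,x}$. Your overall strategy (proper base change to $\mu^{-1}(x)$, stratification by the $W^\circ_{J'}\cap\mu^{-1}(x)$, local monomial computation, and vanishing of the deep strata) is indeed the A'Campo-style route that Sabbah and Guibert follow, so the architecture is right.

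There is, however, a genuine gap in the step that kills the strata with $|J'|=r\ge 2$. You argue that the common Milnor fiber of the local monomial map $(y_1,\dots,y_r)\mapsto(\prod_j y_j^{a_{i,W_j}})_i$ carries a positive-dimensional free torus action coming from the kernel of $(\bC^*)^r\to(\bC^*)^{p+1}$, hence has zero Euler characteristic. That kernel has dimension $r-\mathrm{rank}(a_{i,W_j})$, which is zero whenever the $(p+1)\times r$ exponent matrix has rank $r$ --- the typical situation for a tuple (e.g.\ two components $W_1,W_2$ with $f_1\circ\mu= y_1 u$, $f_2\circ\mu=y_2 v$ locally: the fiber is a finite set with nonzero Euler characteristic). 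So the claimed vanishing is false as stated, and with it the reduction to $|J'|=1$. The correct mechanism is different and forces you to work with the actual definition of $Z^{mon}_{\tilde F,x}$ for $p+1\ge 2$: it is not an Euler-characteristic-weighted product of characteristic polynomials but the alternating product of the codimension-one divisor classes of the cohomology $R$-modules, $R=\bC[t_1^{\pm1},\dots,t_{p+1}^{\pm1}]$. Locally the stalk on an $r$-fold stratum is the Koszul cohomology of $(\lambda_1-1,\dots,\lambda_r-1)$ with $\lambda_j=\prod_i t_i^{a_{i,W_j}}$, and for $r\ge2$ its contribution is trivial either because the support $V(\lambda_1-1,\dots,\lambda_r-1)$ has codimension $\ge 2$ in $(\bC^*)^{p+1}$ (invisible to the divisor-level zeta function), or, when the supports share a codimension-one component, because $\sum_j(-1)^j[H^j]=0$ in $K_0(R)$ for a Koszul complex on $r\ge1$ elements, so the divisor classes cancel in the alternating product. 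For $r=1$ one gets $R/(\lambda-1)$ in degree one, yielding the factor $(\prod_i t_i^{a_{i,W}}-1)^{-\chi(W^\circ\cap\mu^{-1}(x))}$ as you state. Repairing the proposal therefore requires replacing the torus-action argument by this divisor-class/Koszul cancellation (which is precisely Sabbah's argument), and correspondingly upgrading the "multiplicativity under stratification" step to the level of classes in $K_0$ of $R$-modules.
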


Denote by $$\mathcal{P}\mathcal{Z}(Z_{\tilde{F},x}^{mon})$$ the support of the divisor on $(\bC^*)^{p+1}$ associated to $Z_{\tilde{F},x}^{mon}$, the union of the zero and the polar locus, each being a finite union of torsion-translated codimension-one algebraic subtori.

Let $\Omega\subset \tilde{f}^{-1}(0)$ be a finite set consisting of general points of each stratum of a Whitney stratification of $\tilde{f}^{-1}(0)$. By \cite{BLSW} we have:

\begin{thm} \label{mon}
$$
\cS_{\tilde{F}}=\bigcup_{x\in \Omega}\mathcal{P}\mathcal{Z}(Z_{\tilde{F},x}^{mon}).
$$
\end{thm}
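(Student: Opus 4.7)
The strategy is to compare, stalk by stalk, Sabbah's specialization complex for $\tilde F$ with the local monodromy zeta function, and then to use constructibility to reduce the union over all of $\tilde f^{-1}(0)$ to the finite sample $\Omega$.

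By the equivalent sheaf-theoretic description of $\cS_{\tilde{F}}$ noted immediately after Definition \ref{defMS}, one has
$$
\cS_{\tilde{F}} = \bigcup_{x\in \tilde f^{-1}(0)} \mathrm{Supp}_{(\bC^*)^{p+1}}\bigl(\psi_{\tilde F}\bC\bigr)_x,
$$
where $\psi_{\tilde F}\bC$ denotes Sabbah's specialization complex and the support is the Zariski closure of the joint spectrum of the commuting generalized monodromies on the stalk cohomology. Since $\psi_{\tilde F}\bC$ is constructible with respect to a Whitney stratification of $\tilde f^{-1}(0)$ and the monodromy action respects this stratification, the isomorphism class of the pair (stalk, monodromy) is locally constant on each stratum. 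The points of $\Omega$ sample one general point per stratum, so the union in the above display can be replaced by a union over $x\in\Omega$ without loss.

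Next, I would relate this stalk support to $\mathcal{P}\mathcal{Z}(Z_{\tilde{F},x}^{mon})$. Theorem \ref{thmGui} (and its sheaf-theoretic origin) expresses $Z_{\tilde{F},x}^{mon}$ as an alternating product of characteristic polynomials of the joint monodromies on the cohomology groups of $(\psi_{\tilde F}\bC)_x$; thus each codimension-one torus appearing in $\mathcal{P}\mathcal{Z}(Z_{\tilde{F},x}^{mon})$ is automatically an eigen-component of the stalk monodromy, yielding the easy inclusion $\supseteq$ in the statement. For the reverse inclusion I would argue, in the style of Denef's treatment of $p=1$, via perversity of $\psi_{\tilde F}\bC$ up to shift: perversity forces the stalk cohomology to concentrate in a single degree on a Zariski dense open subset of each codimension-one eigen-subtorus of the support, so no cancellation occurs in the alternating product along such a subtorus and every support component is visible in $\mathcal{P}\mathcal{Z}(Z_{\tilde{F},x}^{mon})$.

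The main obstacle is precisely this non-cancellation step. In the $p=1$ case it rests on the classical perversity of nearby cycles; in the multivariable setting one must instead invoke, or prove from scratch, perversity (up to shift) of Sabbah's specialization functor together with a generic concentration statement for the commuting monodromies. I would either import this from the literature on $\psi_{\tilde F}$ or derive it by descent along the log resolution $\bar\mu$, where on each stratum $W_{J'}^\circ$ the generalized monodromy admits an explicit toric model and a Mayer--Vietoris argument along the boundary divisor reduces the concentration claim to checking that a generic rank-one local system on $W_{J'}^\circ$ has cohomology in only one degree.
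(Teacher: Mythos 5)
This theorem is not proved in the paper at all: it is quoted verbatim from \cite{BLSW}, so the ``proof'' you are competing with is a citation. Your outline does reconstruct the right architecture --- it is exactly the Denef-style argument that the introduction attributes to the case $p=1$ (``a consequence of the perversity of the nearby cycles complex''), and the easy steps are fine: the reduction to $\Omega$ by constructibility, and the inclusion $\mathcal{P}\mathcal{Z}(Z_{\tilde{F},x}^{mon})\subset\cS_{\tilde F}$, which follows because the divisor of the zeta function is an alternating sum of divisors of the cohomology modules of $(\psi_{\tilde F}\bC)_x$ and hence is supported inside the stalk support.

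The genuine gap is the step you yourself flag as the ``main obstacle,'' and it is not a technicality you can wave at: for $p+1>1$ the stalks of Sabbah's specialization complex are complexes of modules over the Laurent ring $\bC[t_1^{\pm1},\dots,t_{p+1}^{\pm1}]$, not finite-dimensional vector spaces with a single automorphism, so there is no off-the-shelf notion of perversity (nor an eigenspace decomposition $\psi_{\tilde F,\lambda}$ to restrict to a component of the support, which is what Denef's argument actually uses). Two concrete consequences. First, an individual stalk support need not be pure of codimension one: it can be a codimension $\ge 2$ subset sitting inside a codimension-one component of the global $\cS_{\tilde F}$, in which case the divisor of $Z^{mon}_{\tilde F,x}$ cannot see it at that $x$; the argument therefore has to be organized around components of the global support (finding, for each component $V$, some point $y$ at which $V$ is a codimension-one component of the stalk support with cohomology concentrated in one degree at the generic point of $V$), not stalk by stalk as written. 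Second, the ``generic concentration'' you invoke is precisely the multivariable substitute for perversity that \cite{BLSW} supplies; your proposed derivation ``by descent along the log resolution'' via Mayer--Vietoris would at best reprove the A'Campo/Sabbah formula of Theorem \ref{thmGui}, which computes the divisor but by itself cannot rule out cancellation between cohomological degrees. As the paper does, you should simply cite \cite{BLSW} here; if you insist on a self-contained proof, the non-cancellation statement is the entire content and must be established, not assumed.
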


\section{Log generic polynomials}\label{secLG}

In this section we address log generic polynomials. For the proof of Theorem \ref{thmMain} {\color{hot}$(a)$}, we use the following estimate on asymptotic topological Euler characteristics: 

\begin{lemma}\label{lemmaChi}
Let $\bar{Y}$ be a smooth projective variety, $L$ a very ample line bundle on $\bar{Y}$, and $V\subset \bar{Y}$ a non-empty Zariski locally closed subset. Let $H\in |L^{\otimes k}|$ be a generic element for $k> 0$. Then, for $k\gg 0$,
$$
\chi(V\setminus H)=(-1)^{\dim {V}} \deg_L(\bar{V}_{top})\cdot k^{\dim V} + \text{ lower order terms in }k,$$
 where $\bar{V}_{top}$ is the union of the top-dimensional irreducible components of the closure of $V$. (If $\dim V=0$, there are no ``lower order terms in $k$", by convention.) In particular, for $k\gg 0$, 
$
(-1)^{\dim V}\chi(V\setminus H)>0.$

Moreover,
$$
\chi(V\cap H)=(-1)^{\dim {V}-1} \deg_L(\bar{V}_{top})\cdot k^{\dim V} + \text{ lower order terms in }k
$$
if $\dim V>0$, and in general
$$\chi(H\setminus V)=(-1)^{\dim {\bar{Y}-1}} \deg_L(\bar{Y})\cdot k^{\dim \bar{Y}} + \text{ lower order terms in }k.$$
\end{lemma}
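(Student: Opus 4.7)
The plan is to reduce to the case of a smooth projective variety by additivity of the Euler characteristic over constructible decompositions and induction on $\dim V$, and then extract the leading term via a Chern-class computation for a generic high-degree hyperplane section. Throughout, the key identity is $\chi(V\setminus H)=\chi(V)-\chi(V\cap H)$, which reduces every assertion of the lemma to controlling $\chi(V\cap H)$ asymptotically in $k$.

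First I would stratify $V$ into finitely many smooth locally closed pieces $V=\bigsqcup_\alpha V_\alpha$ with irreducible closures $\bar V_\alpha\subset\bar Y$. By additivity and induction on $\dim V$, only the strata of dimension exactly $d=\dim V$ can contribute to the leading $k^d$-coefficient of $\chi(V\cap H)$, and these strata are dense opens in the irreducible components of $\bar V_{top}$. Replacing each such $V_\alpha$ by its closure $\bar V_\alpha$ changes $\chi$ by the Euler characteristic of a boundary of strictly smaller dimension, which is absorbed into the lower-order terms. If $\bar V_\alpha$ is singular, one further passes to a resolution $\pi\colon\tilde V_\alpha\to \bar V_\alpha$; a Bertini argument stratum-wise against a Whitney stratification of $\bar V_\alpha$ shows that for $k\gg 0$ and $H$ generic the non-isomorphism locus of $\pi$ meets $H$ in dimension $<d-1$, so $\chi(\tilde V_\alpha\cap\pi^{-1}H)$ and $\chi(\bar V_\alpha\cap H)$ agree to order $k^{d-1}$. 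This reduces matters to showing that for $X$ smooth projective of pure dimension $d$,
\[
\chi(X\cap H)=(-1)^{d-1}\deg_L(X)\,k^d+O(k^{d-1}).
\]

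For this smooth projective case, Bertini ensures $X\cap H$ is smooth, and the adjunction sequence gives $c(T_{X\cap H})=c(T_X)|_{X\cap H}/(1+k h)$ with $h=c_1(L)$. By Gauss–Bonnet and the projection formula,
\[
\chi(X\cap H)=\int_X k h\cdot\left[\frac{c(T_X)}{1+k h}\right]_{d-1}=\sum_{j=0}^{d-1}(-1)^j k^{j+1}\int_X h^{j+1}c_{d-1-j}(T_X),
\]
a polynomial in $k$ of degree $d$ with leading coefficient $(-1)^{d-1}\int_X h^d=(-1)^{d-1}\deg_L(X)$. Combining with the reductions above yields the stated asymptotic for $\chi(V\cap H)$ when $\dim V>0$, hence for $\chi(V\setminus H)=\chi(V)-\chi(V\cap H)$ (since $\chi(V)$ is constant in $k$), and the positivity $(-1)^{\dim V}\chi(V\setminus H)>0$ for $k\gg 0$ since $\deg_L(\bar V_{top})>0$. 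The case $\dim V=0$ is immediate because for generic $H$ one has $V\setminus H=V$ and $|V|=\deg_L(\bar V_{top})$. The last formula follows by applying the smooth case to $X=\bar Y$ to obtain the asymptotic of $\chi(H)$ and subtracting $\chi(H\cap V)=O(k^{\dim V})$, which is of strictly lower order whenever $\dim V<\dim\bar Y$.

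The main obstacle is the passage from smooth to possibly singular $\bar V_{top}$: one must confirm that replacing a singular irreducible component by a resolution leaves the leading $k^d$-coefficient of $\chi(\cdot\cap H)$ unchanged. Concretely this rests on Bertini for linear systems of sufficiently high degree against a Whitney stratification, ensuring generic $H$ meets the non-isomorphism locus of the resolution in dimension strictly less than $d-1$, so that the discrepancy is absorbed into the $O(k^{d-1})$ error. Everything else is a bookkeeping exercise in the additivity of $\chi$ and the standard Chern-class calculation.
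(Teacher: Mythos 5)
Your argument is correct in outline but takes a genuinely different route from the paper. The paper never resolves singularities: it reduces to the case of $V$ closed and irreducible (by inclusion--exclusion over components and by writing a locally closed $V$ as $\bar V\setminus Z$), and there computes $\chi(V\cap H)=\int c_{SM}(V\cap H)$ using Aluffi's formula $c_{SM}(V\cap H)=\frac{H}{1+H}\cdot c_{SM}(V)$ for a general high-degree section, together with $\int c_1(L)^{\dim V}c_{SM}(V)_{\dim V}=\deg_L(V)$. Since the top-degree part of the Chern--Schwartz--MacPherson class is the fundamental class, this delivers the leading term $(-1)^{\dim V-1}\deg_L(V)\,k^{\dim V}$ in one stroke, for arbitrarily singular complete $V$. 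Your route replaces this with resolution of singularities plus the classical adjunction/Gauss--Bonnet computation on a smooth model; the smooth computation itself is exactly the $V$-smooth specialization of Aluffi's identity, so the two proofs agree where they overlap. What your approach costs is the comparison step between $\chi(\tilde V_\alpha\cap\pi^{-1}H)$ and $\chi(\bar V_\alpha\cap H)$: the discrepancy is $\chi(E\cap\pi^{-1}H)-\chi(Z\cap H)$ with $E$ the exceptional locus and $Z=\pi(E)$, and bounding these by $O(k^{d-1})$ is not a dimension count alone (Euler characteristics are not controlled by dimension); you need the inductive hypothesis applied to $Z$, and for $E\cap\pi^{-1}H$ either a stratification of $Z$ over which $\pi|_E$ is topologically locally trivial (so that $\chi(E\cap\pi^{-1}H)=\sum_i\chi(F_i)\chi(S_i\cap H)$) or a version of the induction on $\tilde V_\alpha$ for the merely nef-and-big pullback $\pi^*L$. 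Both fixes are standard, but should be said explicitly. Note also that your last formula for $\chi(H\setminus V)$, like the paper's, tacitly assumes $\dim V<\dim\bar Y$; you flag this, the paper does not.
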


\begin{proof}  If $\dim V=0$, then $V\cap H$ is empty, and the claims of the lemma are easily checked in this case. We assume from now that $\dim V>0$.

Assume first $V$ is closed and irreducible. In this case, $V\cap H$ is also irreducible and complete, hence
$$
\chi(V\cap H) = \int c_{SM}(V\cap H) 
$$
where $c_{SM}$ denote the Chern-Schwartz-MacPherson class, and the integral denotes the degree of a class in the Chow group, equal to the degree of the top-dimensional component of that class, see for example \cite[2.2]{A}. By \cite[Proposition 2.6]{A},
$$
c_{SM}(V\cap H)=\frac{H}{1+H}\cdot c_{SM}(V).
$$
Since $\dim V>0$, the right-hand side can be expanded  to yield the equality 
$$
\chi(V\cap H) = (-1)^{\dim V -1}k^{\dim V}\int c_1(L)^{\dim V}\cdot c_{SM}(V)_{\dim V}  + \text{ lower order terms in }k
$$
(if $\dim V$ would be 0, this is not true since the right-hand side is not zero whereas the class $(H/(1+H)) c_{SM}(V)$ is zero by convention).

By \cite[Theorem 1.1]{A},
$$
\int c_1(L)^{\dim V}c_{SM}(V)_{\dim V} =  \deg_{L}(V)>0.
$$
Thus
$$
\chi(V\cap H) =(-1)^{\dim V -1}\deg_L(V)\cdot k^{\dim V} + \text{ lower order terms in }k.
$$

We will extend this result now to the case when $V$ is closed but not necessarily irreducible.  Let $V_i$ with $i\in I$ be the irreducible components of $V$. Using inclusion-exclusion, we can write
$$
\chi(V\cap H) = \sum_{i\in I}\chi(V_i\cap H) + \sum_{W}m_W\cdot\chi(W\cap H)
$$
where $W$ are irreducible components of intersections of at least two distinct irreducible components of $V$, and $m_W$ are suitable multiplicities independent of $k$ since $H$ is generic. It follows by the first part of this proof that 
\begin{align*}
\chi(V\cap H) &= (-1)^{\dim V -1} \sum_{\substack{i\in I\\\dim V_i=\dim V}}\deg_L(V_i)\cdot k^{\dim V} + \text{ lower order terms in }k\\
&  =(-1)^{\dim {V} -1} \deg_L(V_{top})\cdot k^{\dim V} + \text{ lower order terms in }k 
\end{align*}
where $V_{top}$ is the union of the top-dimensional irreducible components of $V$.

Now let $V$ be locally closed and denote by $\bar{V}$ the closure of $V$ in $\bar{Y}$. Let $Z=\bar{V}\setminus V$, so that $Z$ is closed in $\bar{Y}$. Then
\begin{align*}
\chi(V\setminus H) &= \chi(\bar{V}\setminus H) - \chi (Z\setminus H)\\
&= \chi(\bar{V}) - \chi(\bar{V}\cap H) - \chi(Z) + \chi(Z\cap H)\\
&= (-1)^{\dim {V}} \deg_L(\bar{V}_{top})\cdot k^{\dim V} + \text{ lower order terms in }k,
\end{align*}
where the last equality follows from the case handled above. This proves the first assertion.

Writing
$$
\chi(V\cap H) = \chi (\bar{V}\cap H) - \chi (Z\cap H),
$$
we obtain the second assertion, since $\dim (Z\cap H) < \dim V$ if $\dim V>1$, and if $\dim V=1$ then $Z=\emptyset$ by the genericity of $H$.

Next, \begin{align*} \chi(H\setminus V)& = \chi(H) -\chi(\bar{V}\cap H) + \chi(Z\cap H)\\&= (-1)^{\dim {\bar{Y}-1}} \deg_L(\bar{Y})\cdot k^{\dim \bar{Y}} + \text{ lower order terms in }k,\end{align*}as claimed.
\end{proof}

\begin{subs}\label{sub17a} {\bf Proof of Theorem \ref{thmMain} {\color{hot}$(a)$}.} We use the notation from  \ref{subNot1} together with the conditions from \ref{subNot2}, and take $k\gg 0$.
We show that the Monodromy Conjecture holds for the tuple $\tilde{F}=(f_1,\ldots,f_{p+1})$. 

It is enough to show that every candidate polar hyperplane for $Z_{\tilde{F}}^{top}$ arising from $\mu$ is mapped via the exponential map into the monodromy support of $\tilde{F}$. That is, that
$$
Exp(\{a_{1,W}s_1+\ldots+a_{p+1,W}s_{p+1}+n_W=0\}) \subset \cS_{\tilde{F}}
$$
for every $W\in J$. 

By Theorem \ref{mon}, it is thus enough to show that for every $W\in J$, the locus $$\left\{\prod_{i=1}^{p+1}t_i^{a_{i,W}}=1\right\}\subset(\bC^*)^{p+1}$$ is contained in $\mathcal P\mathcal Z (Z_{\tilde{F},x}^{mon})$ for some $x\in \Omega$.

For $W\in J$ and $x\in \Omega$, let
$$
W^\circ _{x}=W^\circ\cap\mu^{-1}(x),
$$
$$J_x=\{W\in J\mid W^\circ _{x}\neq\emptyset\}.$$ 
Then
\be\label{eqFZ}
Z_{\tilde{F},x}^{mon}(t_1,\ldots, t_{p+1})=\prod_{W\in J_x} (t_1^{a_{1,W}}\cdots t_{p+1}^{a_{p+1,W}}-1)^{-\chi(W^\circ _{x})}
\ee
by Theorem \ref{thmGui}.

If $W=H$ and $x\in \Omega$ is a general point on $\mu(H\cap Y)$, the vector $a_{\bullet, W}$ in $\bZ^{p+1}$ is equal to $(0,\ldots, 0,1)$ and $$\chi(W^\circ\cap \mu^{-1}(x))=\chi(\{x\})=1.$$
Moreover, $J_{x}=\{H\}$ in this case. Thus
$
Z_{\tilde{F},x}^{mon}=(t_{p+1}-1)^{-1}
$
and so $\{\prod_{i=1}^{p+1}t_i^{a_{i,W}}=1\}=\mathcal P\mathcal Z (Z_{\tilde{F},x}^{mon})$, which proves the claim in this case.

For the remaining cases fix $x\in\Omega$ a general point of a stratum, different from above, of a Whitney stratification of $\tilde f^{-1}(0)$. It is enough to show that the locus $\{\prod_{i=1}^{p+1}t_i^{a_{i,W}}=1\}$ is contained in $\mathcal P\mathcal Z (Z_{\tilde{F},x}^{mon})$ for every $W$ in $J_x\setminus\{H\}.$ For such $W$, let $$W_x= (W\setminus \cup_{W'\in \bar{J}\setminus\{W,H\}}W') \cap \mu^{-1}(x),$$ so that 
$$
 W^\circ_{x}=W_x \setminus H.
$$
Then
$$
\chi(W^\circ _{x}) = (-1)^{\dim W^\circ _{x}}\deg_L((\ol{W_x})_{top})\cdot k^{\dim W_x} +\text{ lower order terms in }k,
$$
by Lemma \ref{lemmaChi}, where $(\ol{W_x})_{top}$ is the union of the top-dimensional irreducible components of the Zariski closure of $W_x$, and there are no ``lower order terms in $k$" if $\dim W_x =0$. In particular, 
\be\label{eqG1}\chi(W^\circ _{x})\neq 0\quad\text{ for }\quad k\gg 0,\ee 
and hence every $W\in J_x\setminus\{H\}$ contributes to the right-hand side of (\ref{eqFZ}) with a non-trivial factor before cancellations.

Suppose that a non-trivial irreducible factor $P(t)$ of  $\prod_{i=1}^{p+1}t_i^{a_{i,W}}-1$ for some $W\in J_x\setminus\{H\}$ cancels out from (\ref{eqFZ}) and the zero locus of $P(t)$ does not lie in $\mathcal P\mathcal Z (Z_{\tilde{F},x}^{mon})$. Let $J'\subset J_x\setminus\{H\}$ be the set of all $W\in J_x\setminus\{H\}$  with strictly positive multiplicity  of $P(t)$ as a factor of $\prod_{i=1}^{p+1}t_i^{a_{i,W}}-1$. Since the latter polynomial is reduced, this multiplicity has to equal 1. 
The cancellation then implies  
$$\sum_{W\in J'}  \chi(W^\circ _{x}) = 0.
$$
Let
$$
r=\max\{\dim W^\circ_{x}\mid W\in J'\}.
$$
Then
$$
0=\sum_{W\in J', \, \dim W^\circ_{x}=r} (-1)^{r} \deg_L((\ol{W_x})_{top})\cdot k^{r} +\text{ lower order terms in }k,
$$
where for $r=0$ there are no ``lower order terms in $k$". Since  the degree of a non-empty set is $>0$, and hence the coefficient of $k^r$ is non-zero, this implies that
\be\label{eqG2}
\sum_{W\in J'}  \chi(W^\circ _{x}) \neq 0\quad\text{ for }\quad k\gg 0,
\ee
which is a contradiction. $\hfill\Box$
\end{subs}

\begin{subs}\label{subBy} {\bf Proof of Theorem \ref{thmMain2} {\color{hot}$(a)$}.} We let $f_{p+1}=g$ and $\tilde{f}=f\cdot f_{p+1}$ as in the proof of Theorem \ref{thmMain} {\color{hot}$(a)$}, for $k\gg 0$. Since $Z_{\tilde{f}}^{top}(s)=Z_{\tilde{F}}^{top}(s,\ldots, s)$, the restriction of the polar locus of $Z_{\tilde{F}}^{top}$ to the line $s_1=\ldots=s_{p+1}=s$ contains the polar locus of  $Z_{\tilde{f}}^{top}$. The conclusion then follows from Theorem \ref{thmMain} {\color{hot}$(a)$} and the fact that the restriction of the monodromy support $\mathcal S_{\tilde{F}}$ of $\tilde{F}$ to $s_1=\ldots=s_{p+1}=s$ equals the monodromy support $\mathcal S_{\tilde{f}}$ of $\tilde{f}$,  by \cite[Theorem 2.11]{BLSW}. 
$\hfill\Box$
\end{subs}

\section{Log very-generic polynomials}\label{secLVG}

In this section we address log very-generic polynomials. We fix Setup \ref{eqDi}.

\begin{subs}\label{subsAL} {\bf The log very-genericity condition.} Since $\bar{\mu}$ is a composition of blowing ups of smooth closed subvarieties, 
$$
 \bZ\oplus \bigoplus_{W\in \bar{J}_{exc}} \bZ [W] \xa{\sim} \text{Pic}(\bar{Y}),\quad (d,b_W)\mapsto {\bbm}^*\cO_{\bP^n}(d)\otimes\cO_{\bar{Y}}\left(-\sum_{W\in \bar{J}_{exc}} b_W W\right)
$$
is an isomorphism of finitely generated abelian groups. We let $\cA\subset \text{Pic}(\bbY)$ be the subset of ample isomorphism classes. Then the above isomorphism identifies $\cA$ with a subcone
$$\cA\subset \bR_+\oplus \bigoplus_{W\in \bar{J}_{exc}} \bR_+ [W],$$
that is, if $L\in\cA$ then every integral point in the ray $\bR_+L$ belongs to $\cA$, where $\bR_+$ denote the strictly positive real numbers.


We introduce the subcone $\cA^{vg}$ of $\cA$ used in Definition \ref{defVGb} of log very-general polynomials:

\begin{defi}\label{defAvg}
Let $\cA^{vg}\subset \cA$ be the set of isomorphism classes of ample line bundles on $\bar{Y}$ such that
for each $W\in J_{exc}$, 
$$
\frac{n_W}{b_W}b_{W'} \not\in\bZ
$$
for all $W'\in \bar{J}_{exc}\setminus \{W\}$ with $W\cap W'\neq\emptyset$. 
\end{defi}

Note that the condition defining $\cA^{vg}$ in $\cA$ is actually a condition on the $\bbm$-ample cone, which coincides with the image of the projection of $\cA$ to the space indexed by $\bar{J}_{exc}$.

\begin{lemma}\label{lemPSBC}

The subset $\cA^{vg}$ of $\cA$ is a non-empty subcone.
\end{lemma}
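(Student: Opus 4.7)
The plan is to verify the two assertions separately: the subcone property by a scaling argument, and non-emptiness by a genericity argument in the real ample cone.

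For the subcone property, observe that the defining inequality $\tfrac{n_W}{b_W} b_{W'} \notin \bZ$ is invariant under positive rational rescaling of the coordinate vector $(d, (b_W)_{W\in\bar{J}_{exc}})$: if we replace $(d,b_W)$ by $(kd, kb_W)$ for $k\in\bQ_{>0}$, the ratio $\tfrac{n_W b_{W'}}{b_W}$ is unchanged. Combined with the fact that ampleness depends only on the numerical ray (since $\mathrm{Pic}(\bbY)$ is torsion-free here by the displayed isomorphism), this shows: if $L\in\cA^{vg}$ and $M$ is any integer point on the ray $\bR_+L$, then $M=kL_{\mathrm{prim}}$ for the primitive integer generator $L_{\mathrm{prim}}$ of the ray, $M$ is ample, and it satisfies the same condition. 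Hence $\cA^{vg}$ is a subcone in the paper's sense.

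For non-emptiness, I would pass to the real Néron–Severi space. The bijection from the displayed formula extends to a linear isomorphism
$$N^1(\bbY)_\bR \;\simeq\; \bR \oplus \bigoplus_{W\in \bar{J}_{exc}} \bR[W],$$
under which the real ample cone $\cA_\bR$ is open and non-empty. For each $W\in J_{exc}$, each $W'\in \bar{J}_{exc}\setminus\{W\}$ with $W\cap W'\neq\emptyset$, and each $m\in\bZ$, the locus
$$H_{W,W',m} \;=\; \{\, n_W b_{W'} = m b_W \,\}$$
is a \emph{proper} linear hyperplane in $N^1(\bbY)_\bR$: the two coordinates $b_W$ and $b_{W'}$ are distinct (since $W\neq W'$) and $n_W\geq 1$ (as $W$ is exceptional), so the defining equation is nontrivial. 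Let $B$ be the union of all $H_{W,W',m}$ as the triple varies; this is a \emph{countable} union of proper hyperplanes, hence of Lebesgue measure zero (equivalently, meager by Baire), so $\cA_\bR\setminus B$ is open and dense in $\cA_\bR$.

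Since $\cA_\bR\setminus B$ is a non-empty open subset of $N^1(\bbY)_\bR$, it contains a rational point $L_\bQ$. Clearing denominators gives a positive integer multiple $L=NL_\bQ\in \cA$ (ampleness is preserved under positive scaling), and by the scale-invariance already noted, $L$ still avoids $B$, i.e.\ $L\in \cA^{vg}$. This completes the proof. There is no substantive obstacle: the only subtle check is that the equations $n_W b_{W'}=mb_W$ really define proper hyperplanes, which comes down to $W\neq W'$ and $n_W\neq 0$. The argument is essentially a countable-avoidance statement inside the open real ample cone, combined with density of rational points and scale-invariance.
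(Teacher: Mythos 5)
Your treatment of the subcone property is fine (the defining condition is homogeneous of degree zero in the coordinates $(d,b_W)$, which is exactly how the paper dispatches it). The non-emptiness argument, however, has a genuine gap at the step ``this is a countable union of proper hyperplanes, hence of Lebesgue measure zero (equivalently, meager), so $\cA_\bR\setminus B$ is open and dense.'' Measure zero or meagerness of $B$ gives only that the complement is \emph{dense} (a dense $G_\delta$), not that it is \emph{open}: the complement of a countable union of hyperplanes is not open in general. And your next step relies critically on openness, because a dense $G_\delta$ need not contain any rational point (the irrationals in $\bR$ are a dense $G_\delta$ missing all of $\bQ$). So as written, the existence of a rational point of $\cA_\bR$ avoiding $B$ is not established, and this is the heart of the lemma.

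The gap is repairable, but it needs the extra observation you did not make: on the ample cone every coordinate satisfies $b_W>0$ (the paper records $\cA\subset \bR_+\oplus\bigoplus_W\bR_+[W]$), so on a neighborhood of any point of $\cA_\bR$ the quantities $n_Wb_{W'}/b_W$ are bounded; hence only finitely many of the hyperplanes $H_{W,W',m}$ meet that neighborhood, $B$ is \emph{locally finite} on $\cA_\bR$, and $\cA_\bR\setminus B$ really is open and dense there. With that in place your conclusion (rational point, clear denominators, scale-invariance) goes through. Note that the paper avoids this topological genericity argument altogether: it starts from an arbitrary ample $L$ and explicitly perturbs it to $p\bigl(L-\sum_W \tfrac{1}{p_W}W\bigr)$ with integers $p_W\gg 0$ chosen so that $n_Wp_W/p_{W'}\notin\bZ$, then verifies the non-integrality by a direct divisibility computation. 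That route is constructive and uses only that small perturbations of an ample class remain ample, whereas yours, once patched, is a standard avoidance argument; either is acceptable, but yours is incomplete as it stands.
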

\begin{proof} By definition $\cA^{vg}$ is a subcone if non-empty. We show that it is non-empty. Fix $L\in \cA$ ample with associated coordinates $b_W$ for $W\in \bar{J}_{exc}$.

Choose integers $p_W\gg 0$ for each $W\in \bar{J}_{exc}$ such that $n_Wp_W/p_{W'}\not\in \bZ$ for all pairs $(W, W')$ of different elements in $\bar{J}_{exc}$. This is possible since $\bar{J}_{exc}$ is finite. Moreover,
$$
L-\sum_{W\in \bar{J}_{exc}}\frac{1}{p_W}W
$$
is an ample $\bQ$-divisor class by \cite[Example 1.3.14]{La}.  Let $p=\prod_{W\in \bar{J}_{exc}}p_W$. Thus
$$
p(L-\sum_{W\in \bar{J}_{exc}}\frac{1}{p_W}W)
$$
is an ample integral divisor class. Replacing $L$ by this new divisor class, one replaces $b_W$ by $p(b_W+1/p_W)$ for each $W\in \bar{J}_{exc}$. Moreover,
$$
\frac{n_W}{p(b_W+1/p_W)} p(b_{W'}+1/p_{W'}) = \frac{p_Wn_W}{p_Wb_W+1}\cdot \frac{p_{W'}b_{W'}+1}{p_{W'}}
$$
is not an integer since $p_{W'}$ does not divide the numerator. This proves the claim.
\end{proof}

\end{subs}

\begin{subs}{\bf Candidate vs. actual poles.} We  keep the notation from \ref{subNot1} together with the conditions from \ref{subNot2}, and let $k\gg 0$. In particular, $\tilde{F}=(f_1,\ldots,f_{p+1})$, with $f=\prod_{i=1}^p$, $\tilde{f}=ff_{p+1}$, and $f_{p+1}=g$ is log generic.

\begin{prop}\label{propCand}
If $f_{p+1}=g$ is log very-generic, then:

\begin{enumerate}[(i)]
\item Every candidate polar hyperplane of $Z_{\tilde{F}}^{top}(s_1,\ldots, s_{p+1})$ arising from the exceptional locus of $\mu$ is a polar hyperplane of order one.
\item  Every candidate pole of $Z_{\tilde{f}}^{top}(s)$ arising from the exceptional locus of $\mu$ is a pole of order one.\end{enumerate}
\end{prop}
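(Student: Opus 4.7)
\begin{subs}{\bf Proof plan for Proposition \ref{propCand}.}
Fix $W_0\in J_{exc}$ and abbreviate $L_W:=a_{1,W}s_1+\dots+a_{p+1,W}s_{p+1}+n_W$ for $W\in J$. Both statements amount to verifying that the relevant pole order equals exactly one; I would split this into an upper bound and a non-vanishing of residue argument.

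\medskip\noindent\textbf{Step 1 (pole order $\leq 1$).} The order of $Z_{\tilde F}^{top}$ along $\{L_{W_0}=0\}$ can only exceed one if some $J'\subset J$ with $W^\circ_{J'}\neq\emptyset$ contains two distinct divisors $W,W'$ with $L_W,L_{W'}\propto L_{W_0}$. Matching the $s_{p+1}$-coefficient forces any such $W$ into $J_{exc}$, and proportionality then forces $n_{W_0}b_W/b_{W_0}=n_W\in\bZ$. By analogous reasoning on the pair $(W,W')$, we would also obtain $n_Wb_{W'}/b_W=n_{W'}\in\bZ$. Since $W,W'$ meet inside the non-empty stratum $W^\circ_{J'}$ (and likewise meet $W_0$ if $W_0\in J'$), log very-genericity forbids both integralities, yielding the desired bound.

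\medskip\noindent\textbf{Step 2 (residue formula).} Step 1 implies that the residue $R(s_1,\dots,s_{p+1})$ of $Z_{\tilde F}^{top}$ along $\{L_{W_0}=0\}$ decomposes as a sum, over $W_1\in J$ with $L_{W_1}=c_{W_1}L_{W_0}$ (forcing $W_1\in J_{exc}$ with $c_{W_1}=b_{W_1}/b_{W_0}>0$), of contributions
\[
R_{W_1}=\frac{1}{c_{W_1}}\sum_{K\subset J|_{W_1}}\chi(W^\circ_{\{W_1\}\cup K})\prod_{W\in K}\frac{1}{L_W|_{L_{W_0}=0}},
\]
where $J|_{W_1}:=\{W\in J\setminus\{W_1\}:W\cap W_1\neq\emptyset\}$, and each $L_W|_{L_{W_0}=0}$ is non-zero by the same Step 1 analysis.

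\medskip\noindent\textbf{Step 3 (asymptotics via Lemma \ref{lemmaChi}).} I would evaluate $R$ at the specific point $P=(0,\dots,0,-n_{W_0}/(kb_{W_0}))$ on the hyperplane. At $P$ each denominator $L_W|_{L_{W_0}=0}(P)$ converges, as $k\to\infty$, to a non-zero constant: to $(n_Wb_{W_0}-n_{W_0}b_W)/b_{W_0}$ when $W\in J_{exc}$ (non-zero by log very-genericity), to $n_W$ for non-exceptional strict transforms, and to $1$ for $W=H$. Lemma \ref{lemmaChi} then yields
\[
\chi(W^\circ_{\{W_1\}})=(-1)^{n-1}\deg_L(W_1)\,k^{n-1}+O(k^{n-2}),\qquad \chi(W^\circ_{\{W_1\}\cup K})=O(k^{n-1-|K|})\ \text{for }K\neq\emptyset,
\]
since $\dim W_1=n-1$. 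The leading $k^{n-1}$-coefficient of $R(P)$ is therefore $(-1)^{n-1}\sum_{W_1}\deg_L(W_1)/c_{W_1}$, a sum of strictly positive numbers, so $R(P)\neq 0$ for $k\gg 0$. This proves (i).

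\medskip\noindent\textbf{Step 4 (part (ii)).} For the one-variable statement, specialize $s_1=\dots=s_{p+1}=s$ and examine the residue of $Z_{\tilde f}^{top}(s)$ at $s_0=-n_{W_0}/N_{W_0}$. Log very-genericity together with $k\gg 0$ again forbids two divisors in $J$ sharing both the pole $s_0$ and a common stratum, so the pole order is at most one, with the remaining contributions coming from exceptional components disjoint from $W_0$. The identical Lemma \ref{lemmaChi} asymptotics, now with an extra factor $1/N_{W_i}\sim 1/(kb_{W_i})$, give a dominant $k^{n-2}$-term $(-1)^{n-1}\sum\deg_L(W_i)/b_{W_i}$, which is non-zero for $k\gg 0$.

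\medskip\noindent\textbf{Main obstacle.} The delicate point is ruling out asymptotic cancellation among the contributions $R_{W_1}$ attached to the various $W_1$ with $L_{W_1}\propto L_{W_0}$. Positivity of every proportionality constant $c_{W_1}=b_{W_1}/b_{W_0}$ and of every top self-intersection $\deg_L(W_1)$ guarantees that all leading $k^{n-1}$ pieces combine with the same sign; log very-genericity is precisely the hypothesis that isolates these well-behaved configurations and prevents spurious proportionality relations.
\end{subs}
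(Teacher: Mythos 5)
Your architecture is the same as the paper's (the paper does (ii) first and deduces (i), but both parts rest on the same two pillars): the non-resonance condition in Definition \ref{defAvg} bounds the pole order by one, and Lemma \ref{lemmaChi} is used to show the residue is non-zero for $k\gg 0$. Step 1 is correct and in fact spelled out more carefully than in the paper (the reduction to $W\in J_{exc}$ via the $s_{p+1}$-coefficient, and the transitivity argument for a pair $W,W'$ both distinct from $W_0$).

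There is, however, a genuine gap in Step 3 (and it propagates to Step 4). The estimate $\chi(W^\circ_{\{W_1\}\cup K})=O(k^{n-1-|K|})$ fails whenever $H\in K$: the divisor $H$ is itself a member of $|L^{\otimes k}|$, so by the second part of Lemma \ref{lemmaChi} one has $\chi(W^\circ_{\{W_1,H\}})=(-1)^{n-2}\deg_L(W_1)k^{n-1}+O(k^{n-2})$, which is of the \emph{same} order $k^{n-1}$ as $\chi(W^\circ_{\{W_1\}})$ and of opposite sign. Since $L_H(P)=1-n_{W_0}/(kb_{W_0})\to 1$, the $K=\{H\}$ term cancels your claimed dominant $K=\emptyset$ term exactly at order $k^{n-1}$; the ``leading $k^{n-1}$-coefficient'' of $R(P)$ is identically zero, so the positivity argument as stated proves nothing. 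The repair is to pair each $K$ not containing $H$ with $K\cup\{H\}$: writing $V_K$ for the corresponding $k$-independent stratum before removing $H$, the two terms combine to
$$
\chi(V_K\setminus H)+\frac{\chi(V_K\cap H)}{L_H(P)}=\chi(V_K)+\chi(V_K\cap H)\cdot\frac{n_{W_0}}{k\,b_{W_0}}\bigl(1+O(1/k)\bigr),
$$
where $\chi(V_K)$ is bounded. Only $K=\emptyset$ contributes at the new top order, giving a dominant term
$(-1)^{n}\,\frac{n_{W_0}}{b_{W_0}}\sum_{W_1}\deg_L(W_1)/c_{W_1}\cdot k^{\,n-2}$, whose coefficient is a sum of terms of one sign, as you intended --- but one order of $k$ lower than you claim, and visible only after this regrouping. (For $n=2$ even this requires an extra word, since $k^{n-2}=1$ competes with the bounded terms $\chi(V_K)$.) The same correction is needed in Step 4.
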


\begin{proof}   We prove $(ii)$ first. We have
\be\label{eqZW}
Z_{\tilde{f}}^{top}(s)=\sum_{\emptyset\neq J'\subset J}\chi(W_{J'}^\circ)\prod_{W\in J'}\frac{1}{N_Ws+n_W}.
\ee
Moreover,
$$
N_W=a_{W}+ a_{p+1,W}
$$
where
$$a_{p+1,W}=\ord_{W}(\mu^*(\mu(H\cap Y))).$$
On the other hand,
$$H\cap Y =\mu^*(\mu(H\cap Y)) - \sum_{W\in J_{exc}}a_{p+1,W}(W\cap Y)$$ 
as a divisor on $Y$, and $\cO_{Y}(H\cap Y)\simeq L^{\otimes k}|_Y$ by definition of $H$. Let
$$
L\simeq {\bbm}^*\cO_{\bP^n}(d)\otimes\cO_{\bar{Y}}\left(-\sum_{W\in \bar{J}_{exc}} b_W W\right)
$$
be the unique representation of the isomorphism class of $L$ in the cone $\cA$. Then 
\be\label{eqbW}
a_{p+1,W}=\left\{
\begin{array}{cl}
kb_W  & \text{ for } W\in J_{exc},\\
1& \text{ for }W=H,\text{ in which case }a_W=0,\\
0&\text{ for }W\in J\setminus (J_{exc}\cup\{H\}).\\
\end{array}\right.
\ee

From now on we will now restrict to those $L$ in $\cA^{vg}$ as in Definition \ref{defAvg}, this being the reason why we prove the proposition only log very-generically and not log generically.




We show that the candidate pole from $W\in J_{exc}$,
$$-\frac{n_W}{N_W}=-\frac{n_W}{a_W+kb_W},$$ is a pole of order one of $Z^{top}_{\tilde{f}}(s)$ for $k\gg 0$. 

Firstly, the pole order is at most 1. If the order would be  $>1$, then from formula (\ref{eqZW}) we see that there must exist $W'\in J\setminus\{W\}$ with $W\cap W'\neq \emptyset$ such that
$$
\frac{n_W}{N_W	}=\frac{n_W}{a_W+kb_W}=\frac{n_{W'}}{N_{W'}}.
$$
This is impossible for $W'\not\in J_{exc}$ for large $k$. If this equality happens for $W'\in J_{exc}$ for infinitely many $k\in\bN$, by taking limit as $k$ goes to infinity we obtain that
$$
\frac{n_W}{b_W}b_{W'}= n_{W'}
$$
which is excluded by the condition that $L$ is in $\cA^{vg}$. 

Since the pole order is at most $1$, to show that the order is equal to 1 it is enough to show that the evaluation at  $s=-n_W/N_W$ of $$(N_Ws+n_W)Z^{top}_{\tilde{f}}(s)$$ is a non-zero number for $k\gg 0$.  By (\ref{eqZW}), this number is
$$
\sum_{\emptyset \neq J'\subset J\setminus\{W\}} \chi(W_{J'\cup\{W\}}^\circ) \prod_{W'\in J'} \frac{N_W}{N_Wn_{W'}-N_{W'}n_W}. 
$$
The denominators are all different than zero if $W^\circ_{J'\cap\{W\}}\neq\emptyset$, as we have seen already. 
For $k\gg 0$,  using (\ref{eqbW}) and the asymptotic behaviour from Lemma \ref{lemmaChi} for the Euler characteristics, the dominant term corresponds to $J'=\{H\}$ and it is equal to $(-1)^{n-1}\deg_L(W) k^{n-1}$. Since this term is non-zero, this proves (ii).


Now we show $(i)$. Let $W\in J_{exc}$. The candidate polar hyperplane for $Z_{\tilde{F}}^{top}$ from $W$ is $\{\sum_{i=1}^{p+1}a_{i,W}s_i+n_W=0\}$. Note that $N_W=\sum_{i=1}^{p+1}a_{i,W}$. Since $Z_{\tilde{f}}^{top}(s)=Z_{\tilde{F}}^{top}(s,\ldots, s)$, the restriction of the polar locus of $Z_{\tilde{F}}^{top}$ to the line $s_1=\ldots=s_{p+1}=s$ contains the polar locus of  $Z_{\tilde{f}}^{top}(s)$. By part (ii), $W$ contributes with the pole
$$
-\frac{n_W}{N_W}=\left.\left\{n_W+\sum_{i=1}^{p+1}a_{i,W}s_i=0\right\}\right | _{s_1=\ldots=s_{p+1}=s} 
$$
to $Z_{\tilde{f}}^{top}(s)$, and we have seen that our assumptions imply that $\frac{n_W}{N_W}\neq \frac{n_{W'}}{N_{W'}} $ for every $W'\in J\setminus \{W\}$ for $k\gg 0$. Thus the polar locus of $Z_{\tilde{F}}^{top}$ must contain the candidate from $W$.
\end{proof}


\end{subs}

\begin{subs}\label{subSMC}{\bf Strong Monodromy Conjecture.} We fix, as always in this section, the setup and notation from \ref{subNot1} together with the conditions from \ref{subNot2}, and take $k\gg 0$. We will show:

\begin{prop}\label{propBSZ} If $f_{p+1}=g$ is log very-generic:
\begin{enumerate}[(i)]
\item Every polar hyperplane of $Z^{top}_{\tilde{F}}(s_1,\dots,s_{p+1})$ arising from the exceptional locus of $\mu$ is an irreducible component of $Z(B_{\tilde{F}})$.
\item Every pole of $Z_{\tilde{f}}^{top}(s)$ arising from the exceptional locus of $\mu$ is a root of the $b$-function of $\tilde{f}$.
\end{enumerate}
\end{prop}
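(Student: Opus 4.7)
I will prove (ii) first, following Loeser's strategy from \cite{Lo2}, and then deduce (i) from (ii) using results on generalised Bernstein--Sato ideals from Gyoja \cite{Gy} and \cite{B-ls}. Fix $W\in J_{exc}$ and set $s_0=-n_W/N_W$, which by Proposition~\ref{propCand}(ii) is an actual pole of order one of $Z^{top}_{\tilde{f}}$; the goal for (ii) is to show that $s_0$ is a root of $b_{\tilde{f}}$.

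The principal tool is Hamm's criterion \cite{H}, a mild strengthening of Malgrange \cite{Ma}: to prove $s_0$ is a root of $b_{\tilde{f}}$ it suffices to produce, for some $y\in\mu(W\cap Y)\subset\tilde{f}^{-1}(0)$ and a small Milnor ball $B_y$ around $y$, a non-zero cohomology class on $W^\circ\cap\mu^{-1}(B_y)$ with values in the rank-one local system whose monodromy around each adjacent divisor component $W'\in J$ is $\exp(2\pi i\,s_0 N_{W'})$. Such a class will be constructed via the non-vanishing theorem of Esnault--Viehweg \cite{EV} for rank-one local systems on the complement of a simple normal crossing divisor, provided an appropriate non-resonance hypothesis on the exponents $s_0 N_{W'}$ is verified.

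Verifying non-resonance is the main obstacle, and it is precisely what forces the restriction to log very-generic $g$. For $W'\in J_{exc}\setminus\{W\}$ meeting $W$, equation~(\ref{eqbW}) gives
\[
s_0\,N_{W'} \;=\; -\,\frac{n_W(a_{W'}+k b_{W'})}{a_W+k b_W}\;\xrightarrow[k\to\infty]{}\;-\,\frac{n_W\,b_{W'}}{b_W},
\]
and the defining condition $n_W b_{W'}/b_W\notin\bZ$ of $\cA^{vg}$ in Definition~\ref{defAvg} guarantees that the limit is non-integer; a rational-function-in-$k$ argument then shows $s_0 N_{W'}\notin\bZ$ for all $k\gg 0$. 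For $W'\in J\setminus(J_{exc}\cup\{H\})$ adjacent to $W$ we have $a_{p+1,W'}=0$ by~(\ref{eqbW}), so $N_{W'}$ is independent of $k$ and $s_0 N_{W'}\to 0$, hence non-integer for $k\gg 0$; the case $W'=H$ is automatic since $s_0$ is a negative rational whose denominator tends to infinity with $k$.

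Given (ii), part (i) follows by combining Gyoja's structure theorem \cite{Gy}, which says $Z(B_{\tilde{F}})$ is a finite union of affine hyperplanes with normal directions $(a_{1,W'},\ldots,a_{p+1,W'})$ coming from divisorial valuations of $\tilde{f}$, with the result of \cite{B-ls} relating the diagonal restriction of $Z(B_{\tilde{F}})$ to the roots of $b_{\tilde{f}}$. The root $s_0$ of $b_{\tilde{f}}$ produced in (ii) therefore forces at least one hyperplane of $Z(B_{\tilde{F}})$ to pass through the diagonal point $(s_0,\ldots,s_0)$; the asymptotic separation argument of Proposition~\ref{propCand}(i), under the log very-generic hypothesis on $g$, identifies this hyperplane uniquely as the candidate $\{\sum_i a_{i,W}s_i+n_W=0\}$ coming from $W$.
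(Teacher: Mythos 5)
Your outline for (ii) matches the paper's strategy (Esnault--Viehweg non-vanishing plus Hamm's asymptotic-integral criterion, with the non-resonance supplied by $\cA^{vg}$; your verification of $s_0N_{W'}\notin\bZ$ is the same computation as in the paper). However, there is a genuine gap in how you set up the Esnault--Viehweg step. You localize at a point $y$ and propose to apply \cite{EV} on $W^\circ\cap\mu^{-1}(B_y)$; but the non-vanishing theorem of \cite{EV} is a global statement requiring a projective compactification of the stratum and a nef and big logarithmic bundle, and $W^\circ\cap\mu^{-1}(B_y)$ is not even algebraic. More importantly, the central difficulty here (flagged in the introduction) is that $W\cap Y$ is not compact, so Loeser's argument does not transfer verbatim. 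The paper resolves this by working on the compact divisor $W\subset\bar Y$, using the log generic divisor $H$ in two essential ways: $H\cap W$ being very ample makes $\Omega^{n-1}_W(\log A_W)\simeq\cO_W(B)\otimes L^{\otimes k}$ nef and big for $k\gg0$, and $W\setminus(H\cap W)$ being affine gives $H^{n-1}(W^\circ,\cL)=H^{n-1}_c(W^\circ,\cL)$, which is what produces a cycle $\gamma$ with $\int_\gamma\eta\neq0$. Your write-up omits both points, and also compresses Hamm's criterion: a non-zero twisted cohomology class is not enough; one needs a horizontal family of cycles $\gamma(t)$ for which the Gelfand--Leray period $t^{1-n_W/N_W}\int_{\gamma(t)}dx/d\tilde f$ has a non-zero limit, which is obtained from $\int_\gamma\eta\neq0$ via the $N$-fold normalized base change.

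The deduction of (i) from (ii) is where the proposal genuinely fails. Knowing that $s_0$ is a root of $b_{\tilde f}$ only gives the single diagonal point $(s_0,\dots,s_0)\in Z(B_{\tilde F})$; since $Z(B_{\tilde F})$ is a union of many hyperplanes, one point does not determine which hyperplane it lies on, and the "asymptotic separation argument of Proposition~\ref{propCand}(i)" concerns poles of the zeta function, not components of $Z(B_{\tilde F})$, so it cannot perform this identification. (Also, the linearity of the components is not Gyoja's theorem; the paper uses \cite{Gy} only for the negativity of roots.) The paper's actual argument is substantially longer: one reruns the proof of (ii) for all weighted products $f_1^{l_1}\cdots f_{p+1}^{l_{p+1}}$ with $\mathbf{l}$ ranging over a set $\mathfrak{L}$ that is dense enough (every $(l_1,\dots,l_p)$ admits $l_{p+1}\gg0$ with $\mathbf{l}\in\mathfrak{L}$), obtaining a set of points $q_{W,\mathbf{l}}\in Z(B^{\,\mathbf{l}}_{\tilde F})$ whose Zariski closure is the whole candidate hyperplane $L$ (Lemma~\ref{closureLemma}); then a finiteness argument in a small ball, the decomposition of $Z(B^{\,\mathbf{l}}_{\tilde F})$ into integer translates of the $Z(B^{\,\mathbf{e}_i}_{\tilde F})$ from \cite[Proposition 4.10]{B-ls}, and Gyoja's negativity result to exclude the nontrivial translates, finally yield $L\subset Z(B^{\,\mathbf{e}_{p+1}}_{\tilde F})\subset Z(B_{\tilde F})$. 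None of this machinery is present in, or replaceable by, your one-point argument.
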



Granted this proposition, we can complete:

\begin{proof}[Proof of parts {\color{hot}$(b)$} of Theorems \ref{thmMain} and \ref{thmMain2}.] The non-exceptional components contribute trivially with irreducible components of the zero locus of the Bernstein-Sato ideal (resp. with roots of the $b$-function), by localizing around general point, hence smooth, on such a component. Thus the claim follows from the previous proposition.  
\end{proof}

The rest of the section is dedicated to the proof of the proposition. 

\begin{proof}[Proof of Proposition \ref{propBSZ} {\color{hot} (ii)}]
 Let $W\in J_{exc}$. We prove that $-{n_W}/{N_W}$ is a root of the $b$-function of $\tilde{f}$ for $k\gg 0$.

 If $W\cap Y$ is compact, one can apply \cite[6.6]{Lo2} directly. However, $W\cap Y$ is typically not compact, so we have to adapt the proof of \cite[6.6]{Lo2}. 
 
We will use \cite[III]{H} as a bridge from the geometric data to roots of the $b$-function. Thus the essential task  is to construct a horizontal multi-valued family of cycles
$$
\gamma(t) \in H_{n-1}(\tilde{f}^{-1}(t),\bC)
$$
for small $t\neq 0$, such that
$$
\lim_{t\ra 0} \;t^{1-\frac{n_W}{N_{W}}} \int_{\gamma(t)} \frac{dx_1\wedge \ldots\wedge dx_n}{d\tilde{f}}
$$ 
exists and is a non-zero constant.

The proof takes a few steps:

\medskip

- Step 1: In general, a multi-valued form $
\tilde{f}^{\al}dx_1\wedge\ldots\wedge dx_n
$ pulls back to a twisted logarithmic form $\omega$ on $\bar Y$. We use the numerical data on the log resolution to identify  twisting line bundles for $\omega$ and for its associated residue form $\eta$.

\medskip

- Step 2: We specialize to $\al=-\frac{n_W}{N_{W}}$. We show that the twisting bundles are non-resonant and positive enough to apply the main theorem of \cite{EV} and obtain that the form $\eta$ determines a non-zero homology class $\gamma$ of an associated (dual) local system. By construction $\int_\gamma\eta\neq 0$.

\medskip

- Step 3: As in the proof of \cite[6.6]{Lo2}, using a trivializing finite cover for the associated local system, we construct the multi-valued family of cycles $\gamma(t)$ as above.

\medskip

- Step 4: Apply \cite[III]{H}.

\medskip

We now give the details of the proof.

\medskip

{\it Step 1.} Let $\al\in\bR$. The multi-valued form
$$
\tilde{f}^{\al}dx_1\wedge\ldots\wedge dx_n
$$
gives a global section in
$
\Gamma(U,\Omega^n_U\otimes_\bC \cL_U )
$
where  $U=\bC^n\setminus \tilde{f}^{-1}(0)$, $\Omega^n_U$ is the sheaf of holomorphic $n$-forms, and $\cL_U$ is the rank one local system on $U$ defined as the pullback via $\tilde{f}$ of the rank one local system on $\bC^*$ with monodromy multiplication by $\exp(-2\pi i \al)$ around the origin.

By construction of $\tilde{f}$, $\mu$ is an isomorphism over $U$,
$$
V:=Y\setminus (\tilde{f}\circ\mu)^{-1}(0)\xrightarrow{\sim} U.
$$ 
We have
$$
\omega=\mu^*(\tilde{f}^{\al}dx_1\wedge\ldots\wedge dx_n)\in \Gamma(V,\Omega^n_V\otimes_\bC \cL_V )
$$
where $\cL_V=\mu^{-1}\cL_U$. For every $W'\in {\bar{J}}$, the order of vanishing of $\omega$ along $W'$ is well-defined and
\be\label{eqOrd}
\ord_{W'}(\omega)=n_{W'}-1+\al N_{W'}.
\ee
The monodromy of $\cL_V$ around $W'$ is $$\exp(-2\pi i\cdot\ord_{W'}(\omega))=\exp(-2\pi i \al N_{W'}).$$ Moreover, $\omega$ has a meromorphic extension to $\bar{Y}$ across the simple normal crossings divisor 
$$A= \bar{Y}\setminus V=\sum_{W'\in \bar{J}}W'.$$ 
More precisely,
$$
\omega\in\Gamma(\bar{Y}, \Omega^n_{\bar{Y}}(\log A)\otimes_{\cO_{\bar{Y}}}\mathcal{M}) \subset \Gamma(V,\Omega^n_V\otimes_\bC \cL_V )
$$
where 
$$
\mathcal{M}=\cO_{\bar{Y}}\left(-\sum_{W'\in \bar{J}}(\ord_{W'}(\omega)+1)\cdot W'\right)
$$
is defined (a definition is necessary since the coefficients are in $\bR$) as
$$
\mathcal{M}=\cL_Y^{can}\otimes_{\cO_{\bar{Y}}}\cO_{\bar{Y}}\left(-\sum_{W'\in \bar{J}}\lfloor \ord_{W'}(\omega)+1\rfloor \cdot W'\right),
$$
with
$\cL_V^{can}$ the canonical Deligne extension of $\cL_V$, and $\lfloor \_ \rfloor$ denoting the round-down. Recall that $\cL_V^{can}$ is a line bundle on $\bar{Y}$ extending $\cO_V\otimes_\bC\cL_V$ and
 is defined as follows. Around a general point of $W'$, let $z$ be a local holomorphic function on $\bar{Y}$ defining $W'$, and let $u$ be a local multi-valued frame for $\cL_V$. Then $$\cL_V^{can}=\cO_{\bar{Y}}\left(-\sum_{W'\in \bar{J}}\{\ord_{W'}(\omega)+1\}\cdot W'\right)$$ is defined by declaring $z^{\{\ord_{W'}(\omega)+1\}}u$ to be a local holomorphic frame, that is, locally $$\cL_V^{can}\simeq \cO_{\bar{Y}}\cdot z^{\{\ord_{W'}(\omega)+1\}}u,$$ where $\{\_\}$ denotes the fractional part.

By definition of $\mathcal{M}$, the $\mathcal{M}$-valued log differential form $\omega$ has no poles nor zeros on $V$. Therefore $\omega$ induces an isomorphism of invertible sheaves
$$
\cO_{\bar{Y}}\xrightarrow{\sim}\Omega^n_{\bar{Y}}(\log A)\otimes_{\cO_{\bar{Y}}}\mathcal{M},
$$
and hence an isomorphism
$$
\mathcal{M}^{-1}\xrightarrow{\sim}\Omega^n_{\bar{Y}}(\log A).
$$

Let us denote the residue of $\omega$ along $W$ by
$$
\eta\in \Gamma(W,\Omega^{n-1}_W(\log A_W)\otimes_{\cO_{\bar{Y}}} \mathcal{M})
$$ 
where
$$
A_W=(A-W)|_W
$$ 
so that 
$W^\circ=W\setminus A_W.$
By definition,  $\eta$ is locally $(z\omega/dz)|_{W^\circ}$  where $z$ is a holomorphic function on $\bar{Y}$ defining $W$.

{\it Step 2.} We take from now $$\al=-\frac{n_W}{N_{W}}.$$ The effect of this choice is that
$$
\ord_W(\omega) + 1 = 0.
$$
This implies that $\eta\neq 0$
and
\be\label{eqc1}
c_1(\cL_V^{can}|_W) = -\sum_{W'\in \bar{J}_W} \{\ord_{W'}(\omega)+1\}\cdot[W'|_W]\quad \in H^2(W,\bR)
\ee
where
$$
\bar{J}_W:=\{W'\in \bar{J}\mid W\neq W'\text{ and }W\cap W'\neq\emptyset\}.
$$
Equation (\ref{eqc1}) guarantees that there exists a rank one local system $\cL$ on $W^\circ$ with monodromy around $W'|_W$ with $W'\in \bar{J}_W$ precisely
$$
-\exp(2\pi i \{\ord_{W'}(\omega)+1\}),
$$
by applying for example \cite[Theorem 1.2 and \S 3]{B-uls}. Then the canonical Deligne extension of $\cL$ to $W$ is
$$
\cL^{can}= \cL_V^{can}|_W. 
$$ 
Thus
$$
\eta\in \Gamma(W,\Omega^{n-1}_W(\log A_W)\otimes_{\cO_{W}} \mathcal{M})\subset \Gamma(W^\circ, \Omega_{W^\circ}^{n-1}\otimes_\bC\cL).
$$
That is, $\eta$ is a meromorphic $\cL$-twisted differential form with no poles nor zeros on $W^\circ$, and $\mathcal{M}|_W$ is the smallest invertible sheaf on $W$ with this property. This is the first ingredient needed to apply \cite{EV}.

Since $H\cap W$ is an irreducible component of $A_W$ and is a very ample divisor class on $W$ for $k\gg 0$, one has that
\be\label{eqCx}
\Omega^{n-1}_W(\log A_W)\simeq \cO_W(K_W+A_W)\simeq \cO_W(B)\otimes_{\cO_{W}}L^{\otimes k}
\ee
is nef and big for $k\gg 0$ by \cite[Example 1.2.10]{La}, where $B=K_W+A_W-(H\cap W)$. This is the second ingredient needed to apply \cite{EV}.

We now assume further that $L\in \cA^{vg}$ as in Definition \ref{defAvg}, this being the reason why we prove the proposition only log very-generically and not log generically. This and (\ref{eqbW}) imply for $k\gg 0$ that for all $W'\in \bar{J}_W$,
\be\label{eqSMX}
\frac{n_W}{N_{W}} N_{W'}\not\in \bZ,
\ee
or, equivalently by (\ref{eqOrd}),
$$
\ord_{W'}(\omega)\not \in \bZ.
$$
Thus none of the monodromies of $\cL$ is $1$. This is the third and last ingredient needed to apply \cite{EV}.

We can now apply the main theorem of \cite{EV} and obtain that the form $\eta$ determines a non-zero class in $H^{n-1}(W^\circ,\cL)$.  Since $W\setminus (H\cap W)$ is affine, its subset $W^\circ$ is also affine. It follows by \cite[(1.5)]{EV1} and its proof that
$
H^{n-1}(W^\circ,\cL) = H^{n-1}_c(W^\circ,\cL).
$
Therefore there exists a cycle
$$
\gamma\in H_{n-1}(W^\circ,\cL^\vee)
$$
with coefficients in the dual local system of $\cL$, such that
$$
\int_\gamma\eta \neq 0. 
$$

{\it Step 3.} For this step the arguments are as in the proof of \cite[6.6]{Lo2}. Consider a Gelfand-Leray form
$$
\frac{\tilde{f}^{1-\frac{n_W}{N_W}}dx_1\wedge \ldots\wedge dx_n}{d\tilde{f}}
$$
on $U$. A local computation shows that the pullback by $\mu^*$ extends over $W^\circ$ and
$$
\mu^*\left .\left(\frac{\tilde{f}^{1-\frac{n_W}{N_W}}dx_1\wedge \ldots\wedge dx_n}{d\tilde{f}} \right)\right |_{W^\circ} =\eta
$$
up to multiplication by a non-zero constant.

Let $N$ be the lowest common multiple of all $N_{W'}$ for $W'\in \bar{J}$. Let $\tilde{Y}\ra\bC$
be the normalization of the base change of $\tilde{f}\circ\mu:Y\ra \bC$ by the morphism $\bC\ra\bC$, $t\mapsto t^N$. Let $\tilde{W}^\circ$ be the inverse image of $W^\circ$ in $\tilde{Y}$. Then the natural map $$\nu:\tilde{W}^\circ\ra W^\circ$$ is \'etale and  $\nu^*\cL$ is the constant sheaf. Thus 
$
\nu^*(\eta)\in H^{n-1}(\tilde{W}^\circ,\bC)
$
and one has a cycle
$
\tilde{\gamma}\in H_{n-1}(\tilde{W}^\circ,\bC)
$
such that
$$
\int_{\tilde{\gamma}} \nu^*\eta \ne 0.
$$
Since $\tilde{W}^\circ$ is smooth, ${\tilde{f}}$ lifts to a trivial fibration on a small tubular neighborhood $T$ of $\tilde{W}^\circ$ in $\tilde{Y}$. Let $T_t$ be the fibers for small $t$. By parallel transport, $\tilde{\gamma}=\tilde{\gamma}(0)$ for a horizontal family   
$$\tilde{\gamma}(t)\in H_{n-1}(T_t,\bC)$$
for small $t$. Pushing forward to $\bC^n$, we obtain a horizontal multi-valued family of cycles
$$
\gamma(t) \in H_{n-1}(\tilde{f}^{-1}(t),\bC)
$$
for small $t\neq 0$, such that
$$
\lim_{t\ra 0} \;t^{1-\frac{n_W}{N_{W}}} \int_{\gamma(t)} \frac{dx_1\wedge \ldots\wedge dx_n}{d\tilde{f}}
$$ 
exists and is a non-zero constant.

{\it Step 4.} As already mentioned, the last statement implies directly that $-n_W/N_{W}$ is a root of the $b$-function of $\tilde{f}$ by applying \cite[III]{H}.  This finishes the proof of Proposition \ref{propBSZ} {\color{hot} (ii)}.
\end{proof}

\begin{proof}[Proof of Proposition \ref{propBSZ} {\color{hot}$(i)$}.]

Let $W\in J_{exc}$. Since $k\gg 0$, we have $a_{W,p+1}>n_W$.
Denote by $\mathfrak{L}$ the set of all $\mathbf{l}=(l_1,\dots,l_{p+1})\in\mathbb{Z}_{>0}^{p+1}$ such that $$\frac{n_W}{l_1a_{W,1}+\dots+l_{p+1}a_{W,p+1}}(l_1a_{W',1}+\dots +l_{p+1}a_{W',p+1})\not\in\mathbb{Z}$$
 for all $W'\in \bar{J}\setminus\{W\}$ with $W'\cap W\neq \emptyset$.  
Then for all  $(l_1,\dots,l_p)\in \mathbb{Z}_{>0}^p$ and for $l_{p+1}\gg0$ large enough relative to $(l_1,\dots,l_p)$, 
$$(l_1,\dots,l_{p+1})\in\mathfrak{L}.$$
To see this, one can use (\ref{eqbW}), the constraint from Definition \ref{defAvg}, and take limits as $l_{p+1}\to \infty$.

It follows from the proof of Proposition  \ref{propBSZ} {\color{hot} (ii)} that for $\mathbf{l}\in\mathfrak{L}$, $$r_{W,\mathbf{l}}:=-\frac{n_W}{l_1a_{W,1}+\dots+l_{p+1}a_{W,p+1}}$$ is a root of the $b$-function of $f_1^{l_1}\dots f_{p+1}^{l_{p+1}}$. 
This easily implies that for all $b(s_1,\dots,s_{p+1})\in B_{\tilde{F}}^{\,\mathbf{l}}$,
$$b(l_1r_{W,\mathbf{l}},\dots,l_{p+1}r_{W,\mathbf{l}})=0,$$
cf. \cite[Lemma 4.20]{B-ls}. Here $B_{\tilde{F}}^{\,\mathbf{l}}$ is the generalized Bernstein-Sato ideal consisting of $b\in \bC[s_1,\ldots,s_{p+1}]$ such that
$$
b\prod_{i=1}^p f_i^{s_i}=P\prod_{i=1}^{p+1}f_i^{s_i+l_i}
$$
for some $P\in\sD[s_1,\ldots,s_{p+1}]$.

Denote
$$q_{W,\mathbf{l}}:= (l_1r_{W,\mathbf{l}},\dots,l_{p+1}r_{W,\mathbf{l}})\in\mathbb{C}^{p+1},$$
so that we can write
$$\mathfrak{Q}:=\{q_{W,\mathbf{l}}\mid \mathbf{l}\in\mathfrak{L} \} \subset \bigcup_{\mathbf{l}\in\mathfrak{L}}Z(B_{\tilde{F}}^{\,\mathbf{l}})=:{\mathcal{Z}}.$$
Notice that all $q_{W,\mathbf{l}}$ lie on the  polar hyperplane of $Z_{\tilde{F}}^{top}(s_1,\dots,s_{p+1})$ contributed by $W$,
$$L:=\{a_{W,1}s_1+\dots+a_{W,p+1}s_{p+1}+n_W=0\}.$$

Denote by $t_i:\mathbb{C}^{p+1}\to \mathbb{C}^{p+1}$ with $i=1,\dots,p+1$, the maps
$$t_i(c_1,\dots,c_{p+1})= (c_1,\dots,c_{i-1},c_i-1,c_{i+1},\dots,c_{p+1}).$$
Then we recall from \cite[Proposition 4.10]{B-ls} that we can write the zero locus of $B_{\tilde{F}}^{\,\mathbf{l}}$ as
\begin{align*}
Z(B_{\tilde{F}}^{\,\mathbf{l}}) &=\bigcup_{i=1}^{p+1}\bigcup_{j=0}^{l_i-1}t_{p+1}^{l_{p+1}}\dots t_{i+1}^{l_{i+1}}t_i^jZ(B_{\tilde{F}}^{\,\mathbf{e}_i})
\end{align*}
where $\mathbf{e}_i$ is the $i$-th standard basis vector.

Now take a small ball $B$  around $(0,\dots,0,-n_W/a_{W,p+1})\in\mathbb{C}^{p+1}$. It follows from the description of $Z(B_{\tilde{F}}^{\,\mathbf{l}})$ above and the definition of ${\mathcal{Z}}$ that only finitely many irreducible components of ${\mathcal{Z}}$ intersect $B$. Denote the reducible variety obtained by taking the union of these components by $Q$. Then $Q$ is an algebraic Zariski closed subset of $\mathbb{C}^{p+1}$.  Since $\mathfrak{Q}\cap B\subset Q$, by taking Zariski closure in $\bC^{p+1}$ we also have $\overline{\mathfrak{Q}\cap B}\subset Q$. Lemma \ref{closureLemma} shows that $\overline{\mathfrak{Q}\cap B}=L$. We thus conclude that
$$L\subset Q\subset {\mathcal{Z}}.$$
We can then find a vector $\mathbf{d}\in \mathfrak{L}$ such that
\begin{align}\label{LinBS}
L\subset Z(B_{\tilde{F}}^{\,\mathbf{d}})=\bigcup_{i=1}^{p+1}\bigcup_{j=0}^{d_i-1}t_{p+1}^{d_{p+1}}\dots t_{i+1}^{d_{i+1}}t_i^jZ(B_{\tilde{F}}^{\,\mathbf{e}_i}).
\end{align}
Suppose that in (\ref{LinBS}), the hyperplane $L$ is contained in $t_{p+1}^jZ(B_{\tilde{F}}^{\,\mathbf{e}_{p+1}})$ for some $j>0$. Apply $t_{p+1}^{-j}$ to the inclusion $L\subset t_{p+1}^jZ(B_{\tilde{F}}^{\,\mathbf{e}_{p+1}})$ to find that
$$\{a_{W,1}s_1+\dots+a_{W,p+1}s_{p+1}+n_W-ja_{W,p+1}=0\}\in Z(B_{\tilde{F}}^{\,\mathbf{e}_{p+1}}).$$
But then $n_W-ja_{W,p+1}<0$, which contradicts the main result of \cite{Gy}. Similarly we find  that for all $i=1,\dots,p$ and $j\ge 0$,
$$L\not\subset t_{p+1}^{d_{p+1}}\dots t_{i+1}^{d_{i+1}}t_i^jZ(B_{\tilde{F}}^{\,\mathbf{e}_i}).$$
We thus conclude that 
$$L\subset Z(B_{\tilde{F}}^{\,\mathbf{e}_{p+1}})$$
However, from the definition of generalized Bernstein-Sato ideals we have $Z(B_{\tilde{F}}^{\,\mathbf{e}_{p+1}})\subset Z(B_{\tilde{F}}),$ so this finishes the proof.
\end{proof}

\begin{lemma}\label{closureLemma}
With the notation of the proof of Proposition \ref{propBSZ} {\color{hot}$(i)$}, $\overline{\mathfrak{Q}\cap B}=L$.
\end{lemma}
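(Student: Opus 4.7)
The plan is to prove the two inclusions separately. First, $\overline{\mathfrak{Q}\cap B}\subset L$ is a direct verification: every $q_{W,\mathbf{l}}$ satisfies
\[
\sum_{i=1}^{p+1}a_{W,i}(l_i r_{W,\mathbf{l}})+n_W = r_{W,\mathbf{l}}\,(l_1a_{W,1}+\dots+l_{p+1}a_{W,p+1})+n_W=0,
\]
so $\mathfrak{Q}\subset L$ and, since $L$ is Zariski closed, this inclusion passes to closures.

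For the reverse inclusion $L\subset\overline{\mathfrak{Q}\cap B}$, my approach is to rationally parametrize $L$ and locate enough points of $\mathfrak{Q}\cap B$ inside it. Concretely, I would introduce the map
\[
\Phi\colon \mathbb{C}^p\dashrightarrow L,\qquad (\lambda_1,\dots,\lambda_p)\longmapsto \frac{-n_W}{\lambda_1a_{W,1}+\dots+\lambda_pa_{W,p}+a_{W,p+1}}(\lambda_1,\dots,\lambda_p,1),
\]
which is an isomorphism from the open locus $\{\sum \lambda_i a_{W,i}+a_{W,p+1}\neq 0\}$ of $\mathbb{C}^p$ onto the open locus $\{y_{p+1}\neq 0\}\cap L$. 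For any $\lambda\in\mathbb{Q}_{>0}^p$, write $\lambda_i=l_i/l_{p+1}$ with $\mathbf{l}\in\mathbb{Z}_{>0}^{p+1}$; then $\Phi(\lambda)=q_{W,\mathbf{l}}$. Since the condition defining $\mathfrak{L}$ is invariant under rescaling $\mathbf{l}$, it descends to the intrinsic condition on $\lambda$ that
\[
\frac{n_W\,(\lambda_1a_{W',1}+\dots+\lambda_pa_{W',p}+a_{W',p+1})}{\lambda_1a_{W,1}+\dots+\lambda_pa_{W,p}+a_{W,p+1}}\not\in\mathbb{Z}
\]
for every $W'\in\bar{J}_W$ with $W'\cap W\neq\emptyset$.

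Next I would note that for each such $W'$ and each integer $m$ the corresponding resonance locus is cut out by a linear equation in $\lambda$, so the \emph{bad} set $\mathcal{H}\subset\mathbb{C}^p$ is a countable union of affine hyperplanes. Since $\Phi$ is continuous and $\Phi(0)=(0,\dots,0,-n_W/a_{W,p+1})$ is the center of $B$, there is a Euclidean neighborhood $U_\epsilon$ of $0$ in $\mathbb{R}_{>0}^p$ with $\Phi(U_\epsilon)\subset B$. Then $\Phi$ carries $\mathbb{Q}_{>0}^p\cap U_\epsilon\setminus\mathcal{H}$ into $\mathfrak{Q}\cap B$. Since rationals are Euclidean-dense in $U_\epsilon$ and removing a countable family of real hyperplanes preserves Euclidean density in an open subset of $\mathbb{R}^p$, this source is Euclidean-dense in $U_\epsilon$, hence Zariski dense in $\mathbb{C}^p$. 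Dominance of $\Phi$ together with irreducibility of $L$ then forces the image to be Zariski dense in $L$, which completes the proof.

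The only genuine subtlety, which I expect to be the main obstacle, is recognizing the scale-invariance of the defining condition of $\mathfrak{L}$: this is precisely what allows the resonance loci to be rewritten as hyperplanes in the $\lambda$-coordinates rather than as a Zariski dense set of obstructions in $\mathbf{l}$-space. Once that observation is in place, the density arguments are essentially formal, and no further hypotheses beyond $L\in\cA^{vg}$ are needed.
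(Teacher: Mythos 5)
Your first inclusion and the reparametrization $\Phi$ are fine, and the observation that the defining condition of $\mathfrak{L}$ is homogeneous of degree $0$ in $\mathbf{l}$ is correct. The gap is in the density step. First, calling each resonance locus an ``affine hyperplane'' presupposes that the linear equation in $\lambda$ is not identically satisfied; this is not automatic (if $a_{W,i}=a_{W',i}=0$ for all $i\le p$, the ratio is the constant $n_Wb_{W'}/b_W$ and the locus is either empty or all of $\bC^p$), and ruling this out is precisely where $L\in\cA^{vg}$ must enter --- yet your argument never actually invokes it. Second, and more seriously, the inference ``$\bQ_{>0}^p\cap U_\epsilon$ is Euclidean-dense and $U_\epsilon\setminus\mathcal{H}$ is Euclidean-dense, hence $(\bQ_{>0}^p\cap U_\epsilon)\setminus\mathcal{H}$ is Euclidean-dense'' is false: the set you remove from is \emph{countable}, and a countable union of proper hyperplanes can contain every one of its points (compare $\bQ\subset\bR$ with the dense $G_\delta$ of irrationals). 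As written, your good set could a priori be empty.

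The repair is to show that near $\lambda=0$ the bad set is in fact a \emph{finite} union of proper hyperplanes: on a small $U_\epsilon$ the denominator $\sum_i\lambda_ia_{W,i}+a_{W,p+1}$ stays close to $a_{W,p+1}=kb_W>0$, so for each $W'$ the ratio is bounded and converges to $n_Wa_{W',p+1}/a_{W,p+1}$ as $\lambda\to 0$; this limit is a non-integer for $W'\in\bar{J}_{exc}$ by the $\cA^{vg}$ condition, and the remaining $W'$ are handled directly, so only finitely many integers $m$ can occur and the corresponding hyperplanes are proper. Removing finitely many proper hyperplanes from $\bQ_{>0}^p\cap U_\epsilon$ does preserve density, and your argument then closes. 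This finiteness statement is exactly the remark, already made in the proof of Proposition \ref{propBSZ} (i), that $(l_1,\dots,l_p,l_{p+1})\in\mathfrak{L}$ for every $(l_1,\dots,l_p)$ once $l_{p+1}\gg 0$. Note that the paper's own proof sidesteps Euclidean density of the good set entirely: for each fixed $\mathbf{l}'\in\bZ_{>0}^p$ it exhibits infinitely many points of $\mathfrak{Q}\cap B$ on a single line $T_{\mathbf{l}'}\subset L$, concludes that the Zariski closure contains that line, and then observes that the union of these lines contains $L\cap(\bQ_{>0}^p\times\bQ)$, which is Zariski dense in $L$. Either adopt that route or patch yours with the finiteness observation above.
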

\begin{proof}
Fix an arbitrary $\mathbf{l}'=(l_1,\dots,l_p)\in\mathbb{Z}_{>0}^p$. We already remarked that for $l_{p+1}\gg0$, $\mathbf{l}=(l_1,\dots,l_{p+1})\in\mathfrak{L}$. For such sufficiently large $l_{p+1}$ we have moreover that $q_{W,\mathbf{l}}\in \mathfrak{Q}\cap B$. Keeping $\mathbf{l}'$ fixed, but letting $l_{p+1}$ get larger, all the points $q_{W,\mathbf{l}}$ lie on the same line $T_{\mathbf{l}'}$, which has parametric representation
$$T_{\mathbf{l}'} = \left\{\left(0,\dots,0,-\frac{n_W}{a_{W,p+1}}\right)+t\left(l_1,\dots, l_p, -\frac{a_{W,1}l_1+\dots+a_{W,p}l_p}{a_{W,p+1}}\right) \mid t\in\mathbb{C} \right\}.$$
We conclude that all lines of this form are contained in $\overline{\mathfrak{Q}\cap B}$. 
It follows that 
$$L\cap (\mathbb{Q}_{>0}^p\times\mathbb{Q})\subset \overline{\mathfrak{Q}\cap B}.$$
Since the left hand side is clearly Zariski dense inside $L$, we conclude that $L\subset  \overline{\mathfrak{Q}\cap B}$. The other inclusion is immediate since $\mathfrak{Q}\cap B\subset L$.
\end{proof}

\end{subs}

\section{Further remarks}\label{secEx}

\begin{subs}{\bf Jumping numbers.}\label{exGen} Even in the simplest situations, the roots of the $b$-function of the log very-generic polynomials that we produced in this note are not necessarily jumping numbers \cite[Definition 9.3.22]{La}, although small jumping numbers give roots of the $b$-function \cite[9.3.25]{La}.

Let $\mu:Y\ra \bC^2$ be the composition of the blow up at the origin, followed by the blow up of point on the exceptional divisor. Let $g$ a log very-generic polynomial on $\bC^2$, as in Definition \ref{defVGb}, where we fit $\mu$ into the Setup \ref{eqDi} by taking $f=1$ and extending $\mu$ to $\bbm:\bar{Y}\ra\bP^2$ trivially. Let $W_1$ and $W_2$ be the exceptional irreducible divisors of $\mu$, which in this case are the same as those of $\bbm$.  Let
$$
\al_i=\frac{n_{W_i}}{N_{W_i}}=\frac{\ord_{W_i}(K_{\bbm})+1}{\ord_{W_i}g}\quad (i=1, 2).
$$ 
Then both $-\al_1$ and $-\al_2$ are roots of the $b$-function of $g$ by Proposition \ref{propBSZ} {\color{hot} (ii)}. However, either $\al_1$ or $\al_2$ is a jumping number of $g$, but not both at the same time.   

More precisely, let $$
\cA _{rel}=\{(b_1,b_2)\in \bZ^2\mid -(b_1W_1+b_2W_2)\text{ is }\mu\text{-ample}\}
$$ be the integral relatively-ample cone (up to a minus sign). Then $\cA_{rel}$ is the image of the projection of the ample cone $\cA$ of $\bar{Y}$ to the space with coordinates indexed by $W_1$ and $W_2$, cf. \ref{subsAL}. One has
$$\cA_{rel}=\{(b_1,b_2)\in \bN^2\mid 0<b_1<b_2<2b_1\}$$
by applying the numerical relative-ampleness criterion. Let $\cA_{rel}^{vg}$ be the projection to $\cA_{rel}$ of the subcone $\cA^{vg}$ of Definition \ref{defAvg}. Then
$$
\cA^{vg}_{rel} = \cA_{rel}\setminus \left \{(b_1,b_2)\in \cA_{rel}\mid  3b_1=2b_2 \right\}.
$$
To a log very-generic polynomial $g$ one attaches a largely-scaled point $(b_1,b_2)$ in $\cA^{vg}_{rel}$, the coordinates corresponding to the power $L^{\otimes k}$ from which $g$ was generated. Then
$
\al_i=\frac{i+1}{b_i}.
$ Thus one can define for $i=1, 2,$ a subcone
$
\cA_i
$
 of $\cA^{vg}_{rel}$ corresponding to the log very-generic polynomials $g$ having $\al_i$ as jumping number. A short computation reveals that
$$
\cA_1 = \left\{(b_1,b_2) \in \cA^{vg}_{rel} \mid 2b_2<3b_1 \right \} = \cA^{vg}_{rel}\setminus \cA_2,
$$
as claimed.

\begin{rmk}\label{rmkLa} (i) The above explicit description of $\cA^{vg}_{rel}$ as an open subcone of $\cA_{rel}$ also illustrates that in this example ``most" log generic polynomials are  log very-generic polynomials, but the log generic polynomials arising from the integral points of the smaller-dimensional subcone $\{3b_1=2b_2\}$ of $\cA_{rel}$ are not log very-generic polynomials.

(ii) The numbers $k_0$, from the definition of log generic polynomials in the sense of the theorems from the introduction, are not always 1 in this setup. Take $A=2W_1+3W_2$ and $L=\bar\mu^*\cO_{\bP^2}(d)(-A)$. Then $L$ is $\bar\mu$-ample. The intersection $W_1\cap H$ is a set of simple points, away from the point $W_1\cap W_2$, of cardinality $-W_1\cdot A=1$. Hence $\chi(W_1\setminus H\cup W_2)=0$ since $W_1=\bP^1$. In other words, the condition (\ref{eqG1}) fails for $(W_1)_x^\circ$ with $x$ the origin in $\bC^2$ (and $k=1$). Thus one must consider $L^{\otimes k}$ with $k>1$ in this case to obtain log generic polynomials.

\end{rmk}

\end{subs}

\begin{subs} {\bf Generalizations.} The same proofs give the following generalizations:

(i) Theorem \ref{thmMain} and Theorem \ref{thmMain2} hold if $g$ is replaced by a power $g^l$ for any integer $l>0$. 

(ii) Theorem \ref{thmMain} {\color{hot}$(a)$} and Theorem  \ref{thmMain2} {\color{hot}$(a)$} hold simultaneously for all $(f_1^{l_1},\ldots,f_p^{l_p},g)$ with $l_i\in\bN$, respectively  $f^lg$ with $l\in\bN$. That is, the lower bound  on $k\gg 0$ in the definition of $g$ does not depend on $l_i$, respectively on $l$.

(iii) Theorem \ref{thmMain} {\color{hot}$(a)$}, Theorem \ref{thmMain2} {\color{hot}$(a)$}, and Proposition \ref{propCand} hold also with the assumption that $\bar{\mu}$ is a log resolution of the divisor of the rational function $f$ in $\bP^n$ replaced by the weaker assumption that $\mu$ is a log resolution of the polynomial $f$. 

\end{subs}

\begin{subs}\label{subk} {\bf Effective log genericity.} We illustrate the complexity of bounds $k_0$ for the integers $k\ge k_0$ involved in the definition of log (very-)generic polynomials from Theorems \ref{thmMain} and  \ref{thmMain2}.  
Consider the setup and notation as in \ref{subNot1} with the additional conditions from \ref{subNot2}. We assume $n=2$. 
 Define $C(\mu)$ to be maximum number of irreducible components of connected components of the $\mu$-exceptional locus $\cup_{W\in J_{exc}}W$. Next, let $I=J\setminus\{H\}$, the union of $J_{exc}$ with the set of irreducible components of $f^{-1}(0)$. Let $L$ be a very ample line bundle on $\bar{Y}$. Write $L\simeq\bbm^*(\cO_{\bP^2}(d))\otimes\cO_{\bar Y}(-\sum_{W'\in \bar J_{exc}}b_{W'}W')$ for some  integers $b_{W'}, d>0$. Define 
\begin{align*}C(\mu,f, L) =& \max\{ 2, n_W+  a_W, n_Wa_{W''}+a_W, n_Wa_{W'}+n_Wa_W\frac{b_{W'}}{b_W}+a_W \mid\\
& W, W' \in J_{exc}, W''\in I\setminus J_{exc},  W\neq W', W\cap W'\neq \emptyset\neq W\cap W''   \}.
\end{align*} 
One can show that $C(\mu,f,L)\ge C(\mu)$.

\begin{prop}\label{propk0} With the above assumptions,  let $k>0$ be an integer, let $H\in |L^{\otimes k}|$ be generic, and let $g$ be an equation for the curve $\mu(H\cap Y)$ in $\bC^2$. 

(a) If $k>C(\mu)$ then $g$ is log generic in the sense of Theorems \ref{thmMain} and  \ref{thmMain2}, hence the Monodromy Conjecture holds for $\tilde F=(f_1,\ldots,f_p,g)$ and $\tilde f=f_1\ldots f_pg$.

(b) If $k>C(\mu,f,L)$ and $L\in \cA^{vg}$ (see Definition \ref{defAvg}), then $g$ is log very-generic in the sense of Theorems \ref{thmMain} and  \ref{thmMain2}, hence the Strong Monodromy Conjecture holds for $\tilde F=(f_1,\ldots,f_p,g)$ and $\tilde f=f_1\ldots f_pg$.
\end{prop}
\begin{proof}  (a) We run the  proofs from \ref{sub17a} and \ref{subBy}. We check that all claims still hold with the assumption that $k>C(\mu)$ instead of $k\gg 0$. We use the notation from that proof.

Let $x$ be a point  not in the image of any $\mu$-exceptional divisor, but lying on an irreducible component of $\tilde f^{-1}(0)$, say corresponding to the strict transform $W\in J$. Then the germ of $\tilde f_i$ at $x$ is isomorphic to $z^{a_{i,W}}$, $n_W=1$, and it is easy to see that the locus $\{\prod_{i=1}^{p+1}t_i^{a_{i,W}}=1\}$ is contained in $\mathcal P\mathcal Z(Z^{mon}_{\tilde F,x})$. Hence the candidate poles from the strict transforms of the irreducible components of $\tilde f^{-1}(0)$ give monodromy eigenvalues.

The remaining candidate poles come from $W\in J_{exc}$. Note that  $W\simeq\bP^1$ in this case. We assume that $J_{exc}$ is not empty, otherwise there is nothing to prove. 

 Using the numerical criterion for ampleness of $L_{|Y}$ relative  to $\mu:Y\to \bC^2$, we have that the vector $b=(b_{W'})_{W'\in J_{exc}}$ satisfies the condition
\be\label{eqCb}
-\sum_{W'\in J_{exc}}b_{W'} W\cdot W' >0\text{ for all } W\in J_{exc}.
\ee

Take now $x$ a point in the image of some exceptional divisor. Then $J_x= \{W\in J_{exc}\mid W\subset \mu^{-1}(x)\}$ is non-empty and $\cup_{W\in J_x}W$ a connected component of $\cup_{W\in J_{exc}}W$.  Let  $W\in J_x$.  Then, since $H$ is generic,
\be\label{eqApx1}
\chi(W_x^\circ) =\chi(W)-\chi(W\cap H)-\chi(W\cap (\cup_{W'\in I\setminus\{W\}}W'))
\ee
$$
= 2+ k\cdot \sum_{W'\in J_{exc}}b_{W'}W\cdot W' - \sum_{W'\in  I\setminus\{W\}}W\cdot W'.
$$
Thus $\chi(W_x^\circ)\neq 0$ if 
$$k\neq \frac{2- \sum_{W'\in  I\setminus\{W\}}W\cdot W'}{-\sum_{W'\in J_{exc}}b_{W'}W\cdot W'}.
$$
The numerator on the right-hand side is $\le 1$ since there the terms $W\cdot W'$ are $0$ or $1$, and at least one such term is $1$. The denominator is $\ge 1$ by (\ref{eqCb}). Thus the right-hand side is
$\le 1$. Since  $k>C(\mu)\ge 1$ we have that $\chi(W_x^\circ)\neq 0$. Thus (\ref{eqG1}) holds for $k>C(\mu)$.

As in \ref{sub17a}, this guarantees that all the candidate polar hyperplanes from $J_{exc}$ of the topological zeta function of $\tilde F$  appear in the formula (\ref{eqFZ}) for $Z^{mon}_{\tilde F,x}$ before cancellations, for some $x$ as above. Further, we need to guarantee that these candidates do not cancel out of $Z^{mon}_{\tilde F,x}$. As in \ref{sub17a}, we need to guarantee that $\sum_{W\in J'}\chi(W^\circ_x)\neq 0$ for certain subsets $J'$ of $J_{x}$. Repeating the computation from above, it is enough to show  that
$$
k\neq \frac{\sum_{W\in J'}\left(2-\sum_{W'\in I\setminus\{W\}} W\cdot W'\right)}{-\sum_{W\in J'}\sum_{W'\in J_{exc}}b_{W'}W\cdot W'}
$$
for all non-empty $J'\subset J_{x}$. The numerator on the right-hand side is $\le |J'|\le |J_x|\le C(\mu)$, the last inequality due to $\cup_{W\in J_x}W$ being connected.  The denominator is $\ge 1$. Thus the right-hand side is $\le C(\mu)<k$, so $\sum_{W\in J'}\chi(W^\circ_x)\neq 0$. Thus (\ref{eqG2}) holds for $k>C(\mu)$. This finishes the proof of part (a).

(b) We run the  proofs from  \ref{subSMC}. We check that all claims still hold with the assumption that $k>C(\mu,f,L)$ instead of $k\gg 0$. The condition that the divisor in (\ref{eqCx}) is nef and big is satisfied easily using the bound $C(\mu,f,L)\ge 2$ since $\cO_W(K_W)=\cO_{\bP^1}(-2)$. Now we only need to show that (\ref{eqSMX}) holds for $k>C(\mu,f,L)$. So, let $W\in J_{exc}$. Since $n=2$, there are only three types of elements of $\bar J_W$:
$$
\bar J_W = (J_W\cap J_{exc})\cup  (J_W\cap (I\setminus J_{exc})) \cup \{H\}.
$$
We show first that (\ref{eqSMX}) holds for $W'\in J_W\cap J_{exc}$, that is, for $W'\in J_{exc}$ with $W\neq W'$ and $W\cap W'\neq\emptyset$. Using (\ref{eqbW}), we see that we must show that
$$
k\neq \frac{n_Wa_{W'}-ta_W}{b_Wt-n_Wb_{W'}}\quad\text{ for all }t\in \bZ.
$$
Note that the denominator is not zero since $L$ is in $\cA^{vg}$. If the right hand side is the constant function 0 or 1 in $t$, the claim holds since $k>C(\mu,f,L)>1$. If $a_W=0$ but $a_{W'}\neq 0$ the claim also holds, since then the right-hand side is still $\le C(\mu,f,L)$. In the remaining cases, the right-hand side represents the values on $\bZ$ of a real function $h(t)=(A-Bt)/(Ct-D)$ with $A, B, C, D\in \bZ_{\ge 0}$, $B,C,D\neq 0$,  and $D/C\not \in \bZ$. This set of values is maximized by the value at the integer closest to $D/C$ in the direction of $A/B$, that is by $\lfloor D/C\rfloor$ or $\lceil D/C\rceil$ depending on the sign of $AC-BD$. In any case, $h(t)\le A +Bt$ for positive $t$, hence the maximal value for $t\in\bZ$ of the right-hand side is 
$$\le A+(D/C +1)B=n_Wa_{W'}+n_Wa_W\frac{b_{W'}}{b_W}+a_W\le C(\mu,f,L)<k.$$
This settles this case.

For the other two cases for $W'$, the claim is easier to prove. If $W'=H$ then one uses the bound $C(\mu,f,L)\ge n_W+a_W$. If $W'\in J_W\cap(I\setminus J_{exc})$, one uses the bound  $C(\mu,f,L)\ge n_Wa_{W'}+a_W$.
\end{proof}

\begin{rmk}
(i) Note that $C(\mu)$ does not depend on $L$ nor on the orders of vanishing of $f$ along components in $J$. The same is true for $C(\mu,1,L)$, but for general $f$ we do not know how to find a lower bound for $k$ independent of $L$ guaranteeing log very-genericity for $g$ as in  Proposition \ref{propk0}.

(ii) If $n\ge 3$, we do not know how to provide a lower bound for $k$ independent of $L$ guaranteeing log genericity. The problem arises already with the analog of (\ref{eqApx1}); if $n=3$ this becomes a quadratic polynomial and the natural upper bounds on its roots depend on $L$.

\end{rmk}

\end{subs}

\begin{subs}\label{subnondeg} {\bf Non-degenerate polynomials.} We recall the definitions of Newton polyhedron and non-degeneracy.  Let $g\in\bC[x_1,\ldots, x_n]$ with $g(0)=0$, and let $g=\sum_{m\in \bN^n}c_mx^m$ with $c_m\in\bC$ be the unique monomial decomposition of $g$. The {\it Newton polyhedron of $g$} (at the origin) is the convex hull $\Gamma(g)$ in $\bR^n_{\ge 0}$ of $\cup_{m\in\text{supp}(g)}(m+\bR^n_{\ge 0})$, where $\text{supp}(g)=\{m\in\bN^n\mid c_m\neq 0\}$. The polynomial $g$ is {\it non-degenerate} (at the origin)  if for any compact face $\tau$ of $\Gamma(g)$, the polynomials ${\partial f_\tau}/{\partial x_i}$, $1\le i\le n$, do not have a common zero in   $(\bC^*)^n=\{x_1x_2\ldots   x_n\neq 0\}$, where $f_\tau=\sum_{m\in\tau\cap\bN^n}c_mx^m$.

Let $\tau$ be a  face of a Newton polyhedron $\Gamma$. Let $\bR^S$ be the minimal coordinate subspace of $\bR^n$ containing $\tau$. Let $\text{aff}(\tau)$ be the affine span of $\tau$ in $\bR^S$. Let $s_\tau=|S|$ and assume $\dim \tau=s_\tau-1$. Then $\text{aff}(\tau)$ is given by an  equation $a_1e_1+\ldots+a_se_s=N$ with $e_i$ forming the standard basis of $\bR^S$, $a_i, N\in\bN$, and $\text{gcd}(a_1,\ldots, a_s, N)=1$. Set $N(\tau)=N$ and $\nu(\tau)=a_1+\ldots+a_s$.

\begin{proof}[Proof of Theorem \ref{thmnondeg}.] We can assume $g$ is  a singular germ. Since $g$ is non-degenerate, all the poles different than $-1$ of the local topological zeta function $Z^{top}_{g,0}(s)$ of $g$ at the origin are included in the set
$$
\left\{-\frac{\nu(\tau)}{N(\tau)}\mid \tau\text{ is a a facet of }\Gamma(g)\text{ not lying in a coordinate hyperplane}\right\},
$$
by a result of Denef \cite{D} for the $p$-adic local zeta function, but the statement for the local topological zeta function follows similarly, see \cite[Th\'eor\`eme 5.3 (ii)]{DLn}. Our assumption on $\Gamma$ implies that all the facets of $\Gamma(g)=k\Gamma$ not lying in a coordinate hyperplane are compact.

It is enough to show that every candidate pole as above gives a zero or a pole of the monodromy zeta function $Z^{mon}_{g,x}(t)$ of $g$ at some point $x$ close to the origin in $\bC^n$.

Fix $\tau_0$ a compact facet of $\Gamma(g)$ not lying in a coordinate hyperplane.  Since $g$ is non-degenerate, by Varchenko \cite{V} we have
$$
Z^{mon}_{g,0}(t)=\prod_\tau (t^{N(\tau)}-1)^{(-1)^{\dim \tau}\cdot(\dim \tau)!\cdot\text{Vol}_\bZ(\tau)}
$$
where the product runs over compact faces $\tau$ of $\Gamma(g)$ with $\dim\tau=s_\tau-1$. Here, if $\dim \tau =0$, one sets $\text{Vol}_\bZ(\tau)=1$. For the other compact faces $\tau$, the latticial volume $\text{Vol}_\bZ(\tau)$ of $\tau$ is computed by declaring that on $\text{aff}(\tau)$ the cube spanned by the lattice basis of $\bZ^n\cap\text{aff}(\tau)$ has volume 1. 

Since $\tau_0$ is compact, $\lambda=e^{-2\pi i\nu(\tau_0)/N(\tau_0)}$ is a root or a pole  of the term in $Z^{mon}_{g,0}(t)$ corresponding to $\tau_0$. We show that $\lambda$ is a root or a pole of  $Z^{mon}_{g,0}(t)$.

Since $\Gamma(g)=k\Gamma$, the faces $\tau$ of $\Gamma(g)$ are into 1-1 correspondence with the faces $\tau'$ of $\Gamma$. Under this correspondence $\tau=k\tau'$. Moreover,  $$(-1)^{\dim \tau}(\dim \tau)!\text{Vol}_\bZ(\tau)=
 (-1)^{\dim \tau'}(\dim \tau')!  \text{Vol}_\bZ(\tau') k^{\dim \tau' } 
$$
for $k>0$. If $\lambda$ is not a root or a pole of $Z^{mon}_{g,0}(t)$, then 
$$
0= (-1)^{n-1}(n-1)!k^{n-1} \sum_{\tau'} \text{Vol}_\bZ(\tau') + (\text{lower order terms in }k)
$$
for $k\gg 0$, where the sum runs over all compact facets $\tau'$ of $\Gamma$ such that $\lambda^{N(k\tau')}=1$. We have that $\sum_{\tau'} \text{Vol}_\bZ(\tau')>0$ since $\tau'_0$ contributes non-trivially to the sum. This is a contradiction for $k\gg 0$, hence $\lambda$ is a root or a pole of  $Z^{mon}_{g,0}(t)$.
\end{proof}

\end{subs}

\end{document}